\g@addto@macro{\UrlBreaks}{\UrlOrds}
\definecolor{darkred}{rgb}{0.4,0,0}
\definecolor{darkgreen}{rgb}{0,0.5,0}
\definecolor{darkblue}{rgb}{0,0,0.4}
\title{Smooth and Rough Positive Currents}
\dedicatory{Dedicated to Professor Jean-Pierre Demailly on the occasion of his 60th birthday}
\author{Simion Filip}
\address{
\parbox{0.5\textwidth}{
	Department of Mathematics\\
	Harvard University\\
	1 Oxford St,
	Cambridge, MA 02138}
}
\email{{sfilip@math.harvard.edu}}
\author{Valentino Tosatti}
\address{
\parbox{0.5\textwidth}{
	Department of Mathematics\\
	Northwestern University\\
	2033 Sheridan Road,
	Evanston, IL 60208}
}
\email{tosatti@math.northwestern.edu}
\theoremstyle{plain}
\newtheorem{theorem}{Theorem}[section]
\newtheorem{proposition}[theorem]{Proposition}
\newtheorem{definition}[theorem]{Definition}
\newtheorem{problem}[theorem]{Problem}
\newtheorem{conjecture}[theorem]{Conjecture}
\newtheorem{question}[theorem]{Question}
\theoremstyle{definition}
\newtheorem{remark}[theorem]{Remark}
\theoremstyle{remark}
\numberwithin{equation}{section}
\newcommand{\Kahler}{{{K}\"ahler}\xspace}
\newcommand{\dVol}{\operatorname{dVol}}
\newcommand{\norm}[1]{\left\|#1\right\|}
\newcommand{\Null}{\textnormal{Null}}
\newcommand{\del}{\partial}
\newcommand{\de}{\partial}
\newcommand{\dbar}{\overline{\del}}
\newcommand{\ddbar}{i\del\dbar}
\newcommand{\db}{\overline{\del}}
\newcommand{\ve}{\varepsilon}
\newcommand{\vp}{\varphi}
\newcommand{\ti}[1]{\tilde{#1}}
\renewcommand{\leq}{\leqslant}
\renewcommand{\geq}{\geqslant}
\begin{document}

\begin{abstract}
We study the different notions of semipositivity for $(1,1)$ cohomology classes on $K3$ surfaces. We first show that every big and nef class (and every nef and rational class) is semiample, and in particular it contains a smooth semipositive representative.
By contrast, we show that there exist irrational nef classes with no closed positive current representative which is smooth outside a proper analytic subset.
We use this to answer negatively two questions of the second-named author. Using a result of Cantat \& Dupont, we also construct examples of projective $K3$ surfaces with a nef $\mathbb{R}$-divisor which is not semipositive.
\end{abstract}

\maketitle

\section{Introduction}
The Kodaira Embedding Theorem shows that a holomorphic line bundle on a compact complex manifold is positive (i.e. admits a smooth Hermitian metric with strictly positive curvature form) if and only if it is ample (the algebro-geometric notion of strict positivity for line bundles). In contrast to this, there are several natural notions of semipositivity for line bundles, which in general are not equivalent.

To be precise, let $X$ be an $n$-dimensional compact complex manifold, equipped with a Hermitian metric $\omega$, and $L$ a holomorphic line bundle on $X$. The most common notions of semipositivity for $L$ are:

\begin{itemize}
\item[(1)] $L$ is {\em semiample} if there is $m\geq 1$ such that $L^{\otimes m}$ is globally generated.
\item[(2)] $L$ is {\em Hermitian semipositive} if there is a smooth Hermitian metric $h$ on $L$ with curvature form $R_h$ which is semipositive definite on $X$.
\item[(3)] $L$ is {\em nef} if for all $\ve>0$ there is a smooth Hermitian metric $h_\ve$ on $L$ with curvature $R_{h_\ve}\geq -\ve\omega$ on $X$.
\item[(4)] $L$ is {\em pseudoeffective} if there is a singular Hermitian metric $h$ on $L$ with curvature current $R_h\geq 0$ on $X$ in the weak sense.
\end{itemize}
If $X$ is K\"ahler then nefness is equivalent to $c_1(L)$ being in the closure of the K\"ahler cone, and if $X$ is furthermore projective then it is also equivalent to $(L\cdot C)\geq 0$ for all curves $C\subset X$ (see e.g. \cite{De2}).

It is well-known \cite{De2} that $(1)\Rightarrow (2)\Rightarrow (3)\Rightarrow (4)$, and there are examples that show that all these implications are strict. For $(4)\not\Rightarrow (3)$ one can take $X$ to be the blowup of $\mathbb{P}^2$ at a point and $L=\mathcal{O}(E)$ where $E$ is the exceptional divisor. The first example where $(3)\not\Rightarrow (2)$ was discovered by Demailly--Peternell--Schneider \cite{DPS}, answering negatively a conjecture of Fujita \cite{Fu}. As for $(2)\not\Rightarrow (1)$, note first of all that being semiample is not a numerical condition, so a ``trivial'' example is a non-torsion line bundle $L\in \mathrm{Pic}^0(X)$ (with $X$ e.g. a torus). But there are also examples of $L$ Hermitian semipositive which is not numerically equivalent to a semiample line bundle: one can take the famous example of Zariski \cite[2.3.A]{Laz}, which was recently shown to be Hermitian semipositive by Koike \cite{Koi}.

However, if $X$ happens to be Calabi-Yau (i.e. compact K\"ahler with torsion canonical bundle) and projective then Kawamata-Shokurov's base-point-free theorem \cite{KMM} shows that if $L$ is nef and big (which for nef line bundles means $(L^n\cdot X)>0$) then $L$ is in fact semiample. In fact, this even holds for a nef and big $\mathbb{R}$-divisor $D$, where $D$ being semiample now means that there is a morphism $f:X\to Y$ onto a normal projective variety such that $D$ is $\mathbb{R}$-linearly equivalent to the pullback of an ample $\mathbb{R}$-divisor on $Y$, and is proved in \cite[Theorem 3.9.1]{BCHM}.

Dropping the assumption of bigness, one obtains the following well-known open problem:

\begin{conjecture}\label{bpf2}
Let $X$ be a projective manifold with torsion canonical bundle, and $L$ a nef line bundle on $X$. Then $L$ is numerically equivalent to a semiample line bundle.
\end{conjecture}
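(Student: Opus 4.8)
The plan is to stratify the problem according to the numerical dimension $\nu=\nu(L)$ and, in each stratum, to exhibit $L$ as numerically equivalent to the pullback of an ample class under a fibration. The two extreme strata are already understood. When $\nu=n$ the class is big and nef, so the Kawamata--Shokurov base-point-free theorem quoted above applies directly and yields semiampleness, with no need to modify $L$. When $\nu=0$ the class $L$ is numerically trivial (that is the meaning of vanishing numerical dimension for a nef class), hence numerically equivalent to $\mathcal{O}_X$, which is trivially semiample. Thus the content of the conjecture is concentrated in the intermediate range $0<\nu<n$, where the goal is to produce a morphism $f\colon X\to Y$ onto a normal projective variety of dimension $\nu$ together with an ample class $A$ on $Y$ such that $L\equiv f^*A$.

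First I would try to realize $f$ as the Iitaka fibration attached to $L$ (or to a line bundle numerically equivalent to it). The decisive input is \emph{abundance} for $L$, namely the equality $\kappa(L)=\nu(L)$: granting this, the systems $|mL|$ for large and divisible $m$ define, after resolving the base locus, a fibration onto a base of dimension $\nu$, and the base-point-free theorem (applied relatively, or after passing to a good model of the total space) upgrades this to genuine semiampleness. The torsion of $K_X$ is what keeps the general fibers Calabi--Yau and makes the relevant vanishing and base-point-free statements available, so that the resulting class is semiample rather than merely nef.

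The hard part will be precisely this nonvanishing/abundance step in the intermediate range: there is no a priori reason for a nef class of numerical dimension $\nu$ to have Kodaira dimension $\nu$, and even producing a single section, i.e. $\kappa(L)\geq 0$, is already a form of the abundance conjecture, open in this generality. In the surface case that concerns the present paper this obstacle evaporates. The only intermediate value is $\nu=1$, i.e. $L^2=0$ with $L\not\equiv 0$; for a $K3$ surface Riemann--Roch gives $\chi(L)=\tfrac12 L^2+2=2$, while nefness forces $h^2(L)=h^0(-L)=0$, so $h^0(L)\geq 2$ and $L$ is effective. The classical theory of linear systems on $K3$ surfaces (Saint-Donat, Pjateckii-Shapiro--Shafarevich) then shows that a suitable multiple of $L$ is base-point-free and realizes $X$ as an elliptic fibration over $\mathbb{P}^1$ with $L$ pulled back from the base, so that $L$ is semiample. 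Finding a substitute for this Riemann--Roch input when $n\geq 3$ --- for instance via the Morrison--Kawamata cone conjecture to locate $L$ on the boundary of the nef cone, or via the structure theory specific to abelian varieties and hyperk\"ahler manifolds (where the Beauville--Bogomolov form plays the role of the self-intersection $L^2$) --- is where the essential difficulty lies.
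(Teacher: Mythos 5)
The statement you were asked to prove is labeled a \emph{conjecture} in the paper, and the paper explicitly describes it as a well-known open problem, open even in dimension $3$; the paper offers no proof of it, only the remark that it would follow from log abundance together with a nonvanishing-type statement (that $L$ is numerically equivalent to a $\mathbb{Q}$-effective line bundle). Your proposal, to your credit, arrives at exactly this reduction and honestly flags it as the unresolved step: the equality $\kappa(L)=\nu(L)$ in the intermediate range $0<\nu<n$ is precisely the open content of the conjecture, and nothing in your argument supplies it. So what you have written is a correct description of the known strategy and of where it breaks down, not a proof; there is a genuine gap, and it is the gap that everyone else is also stuck on.

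The one piece of your proposal that is a complete argument --- the $K3$ surface case via Riemann--Roch ($\chi(L)=2$, $h^2(L)=h^0(-L)=0$, hence $h^0(L)\geq 2$, then the classical theory of linear systems to get global generation of a multiple) --- is essentially verbatim the paper's proof of its Proposition on nef rational classes on $K3$ surfaces, which handles the cases $c_1(L)^2>0$ (base-point-free theorem), $c_1(L)=0$ (triviality via the exponential sequence and simple connectedness), and $c_1(L)^2=0$ with $L$ nontrivial (Riemann--Roch plus the argument of Reid). So for surfaces you and the paper agree; for the conjecture as stated in arbitrary dimension, neither you nor the paper has a proof, and you should not present this as one.
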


This conjecture is in fact a consequence of the log abundance conjecture, provided one can show that $L$ is numerically equivalent to a $\mathbb{Q}$-effective line bundle (a ``nonvanishing'' type conjecture), and is open even in dimension $3$ (see e.g. \cite{Wi,LOP} and references therein). Of course, Conjecture \ref{bpf2} would imply the same statement for nef $\mathbb{Q}$-divisors, but now unlike the nef and big case, one cannot expect this to be true for nef $\mathbb{R}$-divisors: if the class $c_1(D)$ of a nef $\mathbb{R}$-divisor $D$ cannot be written as a positive real linear combination of classes of nef $\mathbb{Q}$-divisors, then $D$ cannot be numerically equivalent to a semiample $\mathbb{R}$-divisor. Explicit examples are given in Section \ref{rough}.

Going back to the general setting of a compact complex $n$-manifold $X$, if $\alpha$ is a closed real $(1,1)$ form on $X$, then one can consider its class $[\alpha]$ in the Bott-Chern cohomology $H^{1,1}(X,\mathbb{R})$ of closed real $(1,1)$ forms modulo $\ddbar$-exact ones; in other words, the representatives of the $(1,1)$ class $[\alpha]$ are of the form $\alpha+\ddbar\vp,\vp\in C^\infty(X,\mathbb{R})$. If $[\alpha]=c_1(L)$ for some holomorphic line bundle $L$, then these representatives can be identified with curvature forms of smooth Hermitian metrics on $L$. In particular, one can extend the semipositivity notions above to $(1,1)$ classes:

\begin{itemize}
\item[(1)] $[\alpha]$ is {\em semiample} if there is a holomorphic surjective map $f:X\to Y$ onto a normal compact K\"ahler analytic space such that $[\alpha]=[f^*\beta]$ for some K\"ahler metric $\beta$ on $Y$.
\item[(2)] $[\alpha]$ is {\em semipositive} if it contains a smooth semipositive representative $\alpha+\ddbar\vp\geq 0$.
\item[(3)] $[\alpha]$ is {\em nef} if for every $\ve>0$ there is a representative which satisfies $\alpha+\ddbar\vp_\ve\geq -\ve\omega$.
\item[(4)] $[\alpha]$ is {\em pseudoeffective} if it contains a closed positive current $\alpha+\ddbar\vp\geq 0$ in the weak sense, where $\vp$ is quasi-psh.
\end{itemize}

Of course we still have that $(1)\Rightarrow (2)\Rightarrow (3)\Rightarrow (4)$, and again we will say that a nef $(1,1)$ class $[\alpha]$ is also big if $\int_X \alpha^n>0$.

The transcendental version of Kawamata-Shokurov's base-point-free theorem is the following (see also \cite[Conjecture 6.1]{To} for the case of Calabi-Yau manifolds as well as \cite[Conjectures 4.13 and 4.17]{To5} for weaker versions):
\begin{conjecture}\label{bpf}
Let $X$ be a compact K\"ahler manifold and $[\alpha]$ a nef $(1,1)$ class on $X$ such that $\lambda[\alpha]-c_1(K_X)$ is nef and big for some $\lambda>0$. Then $[\alpha]$ is semiample.
\end{conjecture}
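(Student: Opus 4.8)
The plan is to follow the strategy of the algebraic Kawamata--Shokurov base-point-free theorem, transplanted to the transcendental setting with closed positive currents in place of sections of a line bundle, splitting the argument according to whether $[\alpha]$ is big. Write $\nu=\nu([\alpha])$ for the numerical dimension, i.e. the largest $k\in\{0,\dots,n\}$ with $\int_X\alpha^k\wedge\omega^{n-k}>0$ (this is independent of the representative since $[\alpha]$ is nef). If $[\alpha]=0$ the statement is vacuous and if $\nu=n$ then $[\alpha]$ is big, so the content lies in the big case together with the range $0<\nu<n$.

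In the big case I would use the description of nef and big classes due to Collins--Tosatti: there is a K\"ahler current $T=\alpha+\ddbar\vp\in[\alpha]$ with analytic singularities, smooth and strictly positive exactly off the null locus $\Null(\alpha)=\bigcup\{V:\int_V\alpha^{\dim V}=0\}$. The hypothesis that $\lambda[\alpha]-c_1(K_X)$ be nef and big plays the role of the positivity needed to contract $\Null(\alpha)$: one expects a bimeromorphic holomorphic map $f:X\to Y$ onto a normal compact K\"ahler space contracting precisely $\Null(\alpha)$, with $[\alpha]=f^*[\beta]$ for a K\"ahler class $[\beta]$. The analytic engine here is the solution of the degenerate complex Monge--Amp\`ere equation $(\alpha+\ddbar\vp)^n=c\,\omega^n$ with bounded potential and the regularity theory of Ko\l odziej and Eyssidieux--Guedj--Zeriahi, after which the contraction is produced by transplanting the algebraic argument to the analytic category.

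The non-big case $0<\nu<n$ is the heart of the matter: here one must manufacture a holomorphic fibration $f:X\to Y$ onto a normal compact K\"ahler space with $\dim_\C Y=\nu$ and $[\alpha]=f^*[\beta]$, $[\beta]$ K\"ahler. The plan is to produce, by solving a degenerate Monge--Amp\`ere equation adapted to the numerical dimension, a closed positive current $T\in[\alpha]$ whose degeneracy directions form an integrable distribution, and then to show that this null foliation has compact leaves defining the fibers of an honest morphism. The replacement for the Kawamata--Viehweg vanishing that drives the algebraic proof is Nadel vanishing for multiplier ideal sheaves of currents, for which the bigness of $\lambda[\alpha]-c_1(K_X)$ supplies the required extra positivity.

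The hardest step, and the reason Conjecture \ref{bpf} is open even in dimension three, is the transcendental nonvanishing/abundance input: in the absence of a line bundle one cannot take holomorphic sections, so even producing a single nonzero closed positive current in $[\alpha]$ of the correct numerical dimension, and then showing that its flat directions integrate to the fibers of a genuine holomorphic map to a lower-dimensional base, is exactly the transcendental abundance phenomenon and has no general proof. The K\"ahler non-projective setting compounds this by obstructing direct use of the cone and contraction theorems, forcing one into a transcendental minimal model program presently available only in low dimension (Das--Hacon, H\"oring--Peternell). This is why the present paper restricts to surfaces, where $\nu\in\{0,1,2\}$ makes the fibration trivial, an elliptic fibration, or the already more tractable big case.
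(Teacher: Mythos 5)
The statement you are addressing is Conjecture \ref{bpf}, which is an \emph{open conjecture}: the paper does not prove it in general, and only establishes the case $\dim X=2$ (Theorem \ref{big}). Your text is accordingly not a proof but a roadmap, and you yourself concede at the end that the essential steps have ``no general proof.'' Read as a proof attempt, it therefore has genuine gaps at every load-bearing point. In the big case, the existence of a bimeromorphic holomorphic contraction $f:X\to Y$ of $\Null(\alpha)$ onto a normal compact K\"ahler space with $[\alpha]=f^*[\beta]$, $[\beta]$ K\"ahler, is precisely the content of the conjecture in that case and is itself open for $n\geq 3$; writing ``one expects'' such a map is not an argument, and neither the Ko{\l}odziej/Eyssidieux--Guedj--Zeriahi regularity theory nor Nadel vanishing is known to produce it. In the non-big case, the passage from the degeneracy directions of a current to an integrable foliation with compact leaves fibering $X$ over a $\nu$-dimensional base is the transcendental abundance problem, which you correctly identify as unsolved --- but identifying the obstruction is not overcoming it.

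Your guess at how the paper handles surfaces is also off the mark. The paper's proof of Theorem \ref{big} does not split by numerical dimension or invoke elliptic fibrations for $\nu=1$. Instead it observes that the hypothesis forces a dichotomy: if $K_X$ is not pseudoeffective then $X$ is projective and $[\alpha]=c_1(D)$ for a nef $\mathbb{R}$-divisor, so the base-point-free theorem for $\mathbb{R}$-divisors \cite[Theorem 3.9.1]{BCHM} applies; if $K_X$ is pseudoeffective then $[\alpha]=([\alpha]-c_1(K_X))+c_1(K_X)$ is automatically big. In the big case the argument is not via Monge--Amp\`ere equations: one takes a K\"ahler current singular exactly along $E_{nK}(\alpha)=\Null(\alpha)$ (Collins--Tosatti), contracts the negative-definite configuration of curves by Grauert's criterion, uses the nefness of $[\alpha]-c_1(K_X)$ together with adjunction and Artin's criterion to show the resulting singularities are rational (this is where the hypothesis on $\lambda[\alpha]-c_1(K_X)$ is actually used), and then glues a K\"ahler metric on $Y$ by a regularized maximum. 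If you want a provable statement, you should target Theorem \ref{big} and supply these specific surface-theoretic ingredients rather than a general-dimensional strategy that cannot currently be executed.
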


In this paper we will focus on compact complex surfaces. First, we observe that in this case Conjecture \ref{bpf} holds:

\begin{theorem}\label{big}
Conjecture \ref{bpf} holds when $\dim X=2$.
\end{theorem}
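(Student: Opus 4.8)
The plan is to dichotomize on whether $[\alpha]$ is big, i.e. whether $\int_X\alpha^2>0$ or $\int_X\alpha^2=0$. First I would record that $X$ is projective: since $\lambda[\alpha]-c_1(K_X)$ is big, $X$ carries a big $(1,1)$-class, hence is Moishezon, and a smooth Moishezon surface is projective. If $\int_X\alpha^2>0$ then $[\alpha]$ is nef and big, so it is semiample by the first result stated above. This big case is the one that genuinely requires transcendental input---on a $K3$ surface $[\alpha]$ may be non-algebraic and no algebraic argument is available---so I regard it as the main obstacle, to be dispatched by that separate theorem rather than re-proved here.

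The substance of the remaining argument is the non-big case $\int_X\alpha^2=0$, which is where the hypothesis on $K_X$ enters. Assume $[\alpha]\neq0$ (otherwise there is nothing to prove) and set $\beta:=\lambda[\alpha]-c_1(K_X)$, which is nef and big, so $\beta^2>0$. Since $\alpha$ and $\beta$ are nef we have $\alpha\cdot\beta\ge0$, and the Hodge index theorem (the intersection form on $H^{1,1}(X,\mathbb{R})$ has signature $(1,h^{1,1}-1)$) shows that a nonzero class $\alpha$ with $\alpha^2=0$ cannot lie in the negative definite hyperplane $\beta^\perp$; hence $\alpha\cdot\beta>0$. As $\alpha\cdot\beta=\lambda\alpha^2-\alpha\cdot K_X=-\alpha\cdot K_X$, this yields $K_X\cdot\alpha<0$.

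The key observation is that $K_X\cdot\alpha<0$ eliminates all transcendental behaviour. If $p_g=h^0(X,K_X)>0$ then $K_X$ is linearly equivalent to an effective divisor, and nefness of $\alpha$ would force $K_X\cdot\alpha\ge0$, a contradiction; hence $p_g=0$, so $h^{2,0}=0$, $H^{1,1}(X,\mathbb{R})$ equals the N\'eron--Severi space, and $[\alpha]$ is an $\mathbb{R}$-divisor class. I can therefore run the cone theorem on the smooth projective surface $X$: the nef class $\alpha$ lies in the Mori cone, and its cone-theorem decomposition together with $K_X\cdot\alpha<0$ produces a $K_X$-negative extremal rational curve $C$; since $\alpha$ is nef with $\alpha^2=0$, every term in the decomposition is orthogonal to $\alpha$, so $\alpha\cdot C=0$.

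Contracting the extremal ray spanned by $C$ gives $g:X\to X'$, and the classification of $K_X$-negative extremal contractions of surfaces leaves three possibilities. The case $X\cong\mathbb{P}^2$ is excluded, since a nonzero nef class there has positive self-intersection. If $g:X\to B$ is of fiber type over a smooth curve, then $\alpha\cdot(\text{fiber})=0$ forces $\alpha=g^*\alpha_B$ for a positive class $\alpha_B$ on $B$, exhibiting $[\alpha]$ as the pullback of a \Kahler class and hence semiample. If $g$ contracts a $(-1)$-curve, then $\alpha=g^*\alpha'$ with $\alpha'$ nef, nonzero and isotropic on the smooth projective surface $X'$, while $K_X=g^*K_{X'}+C$ gives $K_{X'}\cdot\alpha'=K_X\cdot\alpha<0$, and $p_g(X')=0$ is preserved. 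Since this contraction strictly lowers the Picard number, induction on $\rho$ forces a fiber-type contraction to occur after finitely many steps, and pulling back the \Kahler class from the base along the composite morphism shows that $[\alpha]$ is semiample. Thus the only non-elementary ingredient is the big and nef case, which is precisely where the transcendental semiampleness theorem is indispensable.
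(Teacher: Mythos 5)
There is a genuine gap, and it sits exactly where the theorem's real content lies. Your opening claim that $X$ is projective because it carries a big $(1,1)$-class is false: every compact K\"ahler surface carries big $(1,1)$-classes (K\"ahler classes are big), so this implies nothing; Moishezon-ness requires a big \emph{line bundle}, i.e.\ a big integral class. More seriously, you dispose of the case $\int_X\alpha^2>0$ by appealing to a ``separate theorem'' that nef and big $(1,1)$-classes on surfaces are semiample. No such theorem exists, and the statement is false without the hypothesis on $K_X$: the paper itself discusses Grauert's example of a nef and big line bundle $L$ on a projective ruled surface whose null locus contracts to a \emph{non-K\"ahler} normal surface, so that $L$ is not semiample (cf.\ Problem \ref{prob}). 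The hypothesis that $\lambda[\alpha]-c_1(K_X)$ is nef and big is used by the paper precisely in the big case: after producing a K\"ahler current singular exactly along $E_{nK}(\alpha)=\Null(\alpha)$ and contracting the (negative definite) null locus via Grauert's criterion, one needs adjunction plus $-K_X\cdot C_i\geq 0$ to force the components to be $(-1)$- or $(-2)$-curves and Artin's criterion to show the resulting singularities are \emph{rational}, which is what makes $Y$ a K\"ahler space onto which the class and the current descend. None of this is addressed in your proposal, and it cannot be black-boxed.

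Your treatment of the non-big case is, by contrast, essentially sound and is genuinely different from the paper's route: you derive $K_X\cdot\alpha<0$ from the Hodge index theorem, deduce $p_g=0$ (hence projectivity and algebraicity of $[\alpha]$) and then run an explicit two-dimensional MMP. The paper instead splits on whether $K_X$ is pseudoeffective: if not, $H^{2,0}=0$ forces $X$ projective and $[\alpha]=c_1(D)$ for a nef $\mathbb{R}$-divisor, and one quotes the base-point-free theorem for $\mathbb{R}$-divisors \cite[Theorem 3.9.1]{BCHM} wholesale; if $K_X$ is pseudoeffective, then $[\alpha]$ is automatically big and the transcendental contraction argument takes over. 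Your cone-theorem induction is in effect a hands-on reproof of the surface case of the BCHM statement, which is fine, but it only covers the easy half of the theorem.
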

The proof of this result uses some ideas from \cite{CT}. Despite recent advances in the Minimal Model Program for K\"ahler $3$-folds (see \cite{HP}), Conjecture \ref{bpf} remains open in dimensions $3$ or higher. However, the work of \cite{HP} can be used to prove Conjecture \ref{bpf} for $3$-folds in many cases, such as when $\lambda[\alpha]-c_1(K_X)$ is K\"ahler and the extremal face in the cone of classes of closed positive $(2,2)$ currents which intersect $[\alpha]$ trivially is in fact an extremal ray (cf. the discussion in \cite{TZ} after Conjecture 1.2). Also, in dimension $3$ if $X$ is assumed to have torsion canonical bundle, then Conjecture \ref{bpf} holds, as was communicated to us by Andreas H\"oring \cite{Hor}: a finite \'etale cover of $X$ is then either a torus, a projective Calabi-Yau manifold with no holomorphic $2$-forms, or the product of an elliptic curve and a $K3$ surface. The first case is easy, in the second case Kawamata-Shokurov's base-point-free theorem applies, and the third case can be dealt with. Furthermore, H\"oring shows in \cite{Hor} that Conjecture \ref{bpf} holds in dimension $3$ if $\lambda[\alpha]-c_1(K_X)$ is K\"ahler.

One may now wonder what happens to nef $(1,1)$ classes which are not big. As mentioned in Conjecture \ref{bpf2} above, on projective Calabi-Yau manifolds, nef line bundles are expected to be semiample, after possibly twisting by a numerically trivial line bundle. Of course this would imply the same statement for nef $\mathbb{Q}$-divisors. We record here the well-known fact that this holds in the case of $K3$ surfaces:
\begin{proposition}\label{nef}
Let $X$ be a $K3$ surface and $[\alpha]$ a nef $(1,1)$ class on $X$ with $[\alpha]\in H^2(X,\mathbb{Q})$. Then $[\alpha]$ is semiample.
\end{proposition}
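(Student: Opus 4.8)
The plan is to split according to the self-intersection number $[\alpha]^2$, which is nonnegative because $[\alpha]$ is nef and hence a limit of Kähler classes. First I would apply the Lefschetz theorem on $(1,1)$-classes: since $X$ is a $K3$ surface one has $\mathrm{Pic}(X)=\mathrm{NS}(X)=H^{1,1}(X)\cap H^2(X,\mathbb{Z})$ (using $H^1(X,\mathcal{O}_X)=0$), so the rational class $[\alpha]$ satisfies $m[\alpha]=c_1(L)$ for some integer $m\geq 1$ and some holomorphic line bundle $L$, which is then nef. Because semiampleness of a class is invariant under multiplication by a positive rational number (the morphism $f$ in the definition is unchanged and one merely rescales the Kähler class $\beta$ on $Y$), it suffices to show that $c_1(L)$ is semiample, and we may assume $[\alpha]\neq 0$ (the zero class being semiample via the constant map to a point).

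If $[\alpha]^2>0$, then $L$ is nef and big. Since $K_X$ is trivial, $\lambda[\alpha]-c_1(K_X)=\lambda[\alpha]$ is nef and big for every $\lambda>0$, so Theorem \ref{big} applies directly and $[\alpha]$ is semiample.

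It remains to treat the case $[\alpha]^2=0$ with $[\alpha]\neq 0$. Here I would first show that $X$ is projective. The intersection form on $\mathrm{NS}(X)$ is non-degenerate: any element of its radical would be orthogonal to $\mathrm{NS}(X)$ and, being automatically orthogonal to the transcendental lattice $\mathrm{NS}(X)^{\perp}$, would be orthogonal to the full-index sublattice $\mathrm{NS}(X)\oplus\mathrm{NS}(X)^{\perp}$ of the unimodular lattice $H^2(X,\mathbb{Z})$, forcing it to vanish. Thus a nonzero isotropic class such as $[\alpha]$ forces the signature of $\mathrm{NS}(X)$ to be $(1,\rho-1)$ with $\rho=\mathrm{rk}\,\mathrm{NS}(X)$ by the Hodge index theorem; in particular $\mathrm{NS}(X)$ contains a class of positive self-intersection, and by the standard projectivity criterion for $K3$ surfaces this makes $X$ projective. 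Now I invoke the classical structure theory of nef isotropic classes: writing $L=kE$ with $E$ primitive in $\mathrm{NS}(X)$ and $k\geq 1$, the class $E$ is nef with $E^2=0$, and since $E\cdot H>0$ for an ample class $H$ (again by Hodge index) we have $h^0(-E)=0$, so Riemann--Roch gives $h^0(E)\geq 2$. The classical theorem of Pjateckii-Shapiro--Shafarevich and Saint-Donat then shows that $|E|$ is base-point-free and defines an elliptic fibration $f\colon X\to\mathbb{P}^1$ with $E=f^*\mathcal{O}_{\mathbb{P}^1}(1)$. Consequently $c_1(L)=f^*\mathcal{O}_{\mathbb{P}^1}(k)$ is semiample, and $[\alpha]=f^*\gamma$ with $\gamma$ a rational Kähler class on $\mathbb{P}^1$, exactly as required by the definition of a semiample class.

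The main obstacle is this last case: the whole argument rests on the base-point-freeness of the primitive isotropic system $|E|$, i.e. on the classical fact that a primitive nef class of self-intersection zero on a projective $K3$ surface is an elliptic pencil. This is where the special geometry of $K3$ surfaces genuinely enters, and I would rely on the classical references rather than reprove it; the reduction to projectivity via the Hodge index theorem is the other step that requires care.
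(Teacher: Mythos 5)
Your overall architecture matches the paper's: reduce to a nef line bundle $L$, split on $\int_X c_1(L)^2>0$ versus $=0$ versus $c_1(L)=0$, handle the big case by base-point-freeness, and handle the isotropic case by Riemann--Roch plus the classical fact that a nef isotropic class with two sections gives an elliptic pencil. The big case and the trivial case are fine. But there is a genuine gap in the isotropic case: your claim that $X$ must be projective is false, and the lattice argument you give for it is circular. If $v$ lies in the radical of $\mathrm{NS}(X)$, then $v\in \mathrm{NS}(X)\cap \mathrm{NS}(X)^{\perp}$, so $\mathrm{NS}(X)+\mathrm{NS}(X)^{\perp}$ is \emph{not} a direct sum and does \emph{not} have finite index in $H^2(X,\mathbb{Z})$ (its rank drops by exactly the rank of the radical); the assertion that $\mathrm{NS}\oplus\mathrm{NS}^{\perp}$ is full-index is equivalent to the non-degeneracy you are trying to prove. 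And the conclusion really does fail: by surjectivity of the period map there exist $K3$ surfaces with $\mathrm{NS}(X)=\mathbb{Z}e$, $e^2=0$. Such a surface is non-projective (no class of positive square in $\mathrm{NS}$), yet $\pm e$ is nef since there are no $(-2)$-classes to cut down the K\"ahler cone, and Riemann--Roch still produces an elliptic pencil in the class $e$. So your case split sends a genuinely occurring case through a door that isn't there, and moreover Pjateckii-Shapiro--Shafarevich and Saint-Donat, as you invoke them, are statements about algebraic $K3$ surfaces.

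The repair is to delete the projectivity step entirely, which is what the paper does. Riemann--Roch gives $h^0(L)+h^0(-L)\geq 2$; since $L$ is nef and nontrivial, $-L$ cannot be effective (pair a nonzero effective class against a K\"ahler form rather than an ample divisor --- this is where your use of an ample $H$ must be replaced), so $h^0(L)\geq 2$; and the base-point-freeness of the resulting pencil is proved by an argument (the paper cites Reid, Theorem 3.8(b); see also BHPV) that is valid on an arbitrary, possibly non-projective, $K3$ surface. With that substitution your proof closes up and coincides with the paper's.
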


In light of Theorem \ref{big} and Proposition \ref{nef}, the following result may therefore come as a surprise:

\begin{theorem}\label{main}
We have that
\begin{itemize}
\item[(a)] There is a non-projective $K3$ surface with a nef $(1,1)$ class $[\alpha]$ (which is necessarily not big and not rational) which is not semipositive.
\item[(b)] There is a projective $K3$ surface with a nef $\mathbb{R}$-divisor $D$ which is not Hermitian semipositive. In particular, $D$ cannot be numerically equivalent to a semiample $\mathbb{R}$-divisor.
\end{itemize}
\end{theorem}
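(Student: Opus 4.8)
The plan is to build both examples from the dynamics of a positive-entropy automorphism of a $K3$ surface, using the rigidity theorem of Cantat and Dupont as the source of the obstruction. Let $f\colon X\to X$ be a holomorphic automorphism of a $K3$ surface with positive topological entropy $\log\lambda$, where the dynamical degree $\lambda>1$ is a Salem number. The pullback $f^*$ acting on $H^{1,1}(X,\mathbb{R})$ has a simple eigenvalue $\lambda$ with a nef eigenclass $\eta^+$ and a simple eigenvalue $\lambda^{-1}$ with a nef eigenclass $\eta^-$; both satisfy $(\eta^\pm)^2=0$ and $\eta^+\cdot\eta^->0$, so they are nef but not big, and since $\lambda\notin\mathbb{Q}$ the classes $\eta^\pm$ are irrational. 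For part (a) I would take one of McMullen's non-projective $K3$ surfaces carrying such an $f$; for part (b) I would take a projective $K3$ surface on which $f$ acts non-trivially on $\mathrm{NS}(X)$ with Salem number $\lambda$, chosen so that $f$ is \emph{not} a Kummer example, in which case $\eta^\pm\in\mathrm{NS}(X)\otimes\mathbb{R}$ are irrational nef $\mathbb{R}$-divisor classes $D^\pm$.

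The key analytic input is the theory of the invariant currents. Each class $\eta^\pm$ contains a closed positive current $T^\pm$ with $f^*T^\pm=\lambda^{\pm1}T^\pm$, and --- because $\eta^\pm$ spans an $f^*$-eigenray on the boundary of the positive cone --- this current is the \emph{unique} closed positive current in its class: for any closed positive $S\in\eta^+$ the averages $\lambda^{-n}(f^n)^*S$ converge to a multiple of $T^+$, forcing $S=T^+$ (this is due to Cantat). The wedge product $\mu:=T^+\wedge T^-$ is well defined, since the local potentials of $T^\pm$ are continuous, and it is the measure of maximal entropy of $f$.

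Now suppose, toward a contradiction, that both $\eta^+$ and $\eta^-$ were semipositive (in part (b), that both $D^\pm$ were Hermitian semipositive). Then each $\eta^\pm$ has a smooth semipositive representative $\alpha^\pm\geq0$, which is a closed positive current in the class $\eta^\pm$; by the uniqueness above, $T^\pm=\alpha^\pm$ is smooth. Since $\int_X(\alpha^\pm)^2=(\eta^\pm)^2=0$ and $\alpha^\pm\geq0$ pointwise, $\alpha^\pm$ has rank $\leq1$ everywhere, and then $\mu=\alpha^+\wedge\alpha^-$ is a \emph{smooth} nonnegative $(2,2)$-form with $\int_X\mu=\eta^+\cdot\eta^->0$; in particular $\mu$ is absolutely continuous with respect to Lebesgue measure. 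By the Kummer rigidity theorem of Cantat and Dupont this forces $f$ to be a Kummer example. But a Kummer surface is projective, so this is already impossible in the non-projective case of (a); in case (b) it contradicts our choice of a non-Kummer $f$. Hence at least one of $\eta^+,\eta^-$ is not semipositive, which (after possibly replacing $f$ by $f^{-1}$) gives the class required in (a) and the $\mathbb{R}$-divisor required in (b). For the last sentence of (b), note that a semiample $\mathbb{R}$-divisor is Hermitian semipositive (pull back a \Kahler form from the base) and Hermitian semipositivity depends only on the numerical class, so $D^+$ being not Hermitian semipositive rules out its being numerically equivalent to a semiample $\mathbb{R}$-divisor.

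The main obstacle is the passage from semipositivity of the class to smoothness of the dynamically invariant current and then to absolute continuity of $\mu$: this rests on the uniqueness of the positive current in each boundary eigenclass, and on the nontrivial identification of the measure of maximal entropy with $T^+\wedge T^-$. I expect the delicate points to be (i) checking that the $\mu$ produced this way really is the Cantat--Dupont measure of maximal entropy to which the rigidity theorem applies, and (ii) securing the existence of the dynamical inputs --- McMullen's non-projective automorphisms for (a), and an explicit projective non-Kummer automorphism acting irrationally on $\mathrm{NS}(X)$ for (b). By contrast, the cohomological bookkeeping (nef, not big, irrational) and the final semiample-implies-semipositive remark are routine.
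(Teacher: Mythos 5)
Your part (b) follows essentially the paper's route: take a projective non-Kummer hyperbolic automorphism (the paper uses a generic $(2,2,2)$ hypersurface in $\mathbb{P}^1\times\mathbb{P}^1\times\mathbb{P}^1$, ruling out Kummer because its Picard rank is $3<17$), observe that $[\eta_\pm]$ are irrational nef classes in $\mathrm{NS}(X)\otimes\mathbb{R}$, and use the Cantat--Dupont rigidity theorem to contradict absolute continuity of $\mu=\eta_+\wedge\eta_-$ if both currents were smooth. This is sound, modulo actually exhibiting the example, which you defer.

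Part (a), however, has a genuine gap. You invoke the Cantat--Dupont Kummer rigidity theorem on a \emph{non-projective} $K3$ surface, but the main result of \cite{CD} is a theorem about projective surfaces --- this is precisely why the paper states Theorem \ref{thm:main}(iii) only for projective $X$ and emphasizes that the projective case requires different tools from the Siegel disc case. As cited, the rigidity theorem is simply not available for McMullen's examples. Moreover, even granting such an extension, your way of closing the contradiction --- ``a Kummer surface is projective'' --- is false: the Kummer $K3$ of a non-algebraic complex $2$-torus is non-projective (its N\'eron--Severi group is the negative definite rank-$16$ lattice spanned by the exceptional curves); the paper is careful to assert only that \emph{projective} Kummer examples have Picard rank at least $17$. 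The correct argument for (a), and the one the paper gives, avoids rigidity entirely and uses the Siegel disc directly: if both $\eta_\pm$ were smooth off a proper analytic set $E$, then $\mu=\eta_+\wedge\eta_-$ would equal $f\,\dVol$ on $X\setminus E$ with $f$ smooth and not identically zero (as $\mu$ does not charge pluripolar sets); ergodicity of $\mu$ together with the $T$-invariance of $\dVol$ forces $f\equiv 1$, i.e.\ $\mu=\dVol$; but $\eta_\pm$ restrict to zero on the Siegel disc $U$ by \cite{McM}, so $\mu(U)=0$ while $U$ has positive Lebesgue measure --- a contradiction. You should replace your rigidity argument in (a) by this one (or else supply a proof of Kummer rigidity in the non-projective setting, which is a substantial separate theorem not contained in \cite{CD}).
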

Part (a) gives a counterexample to a question of the second-named author in \cite{To4}, and can also be used to provide a counterexample to a related question in \cite{To3,To4}, see Questions \ref{q1} and \ref{q2} below, as well as Theorem \ref{negative}.

Part (b) should be compared with Conjecture \ref{bpf2}. Furthermore, the nef class $c_1(D)$ in Theorem \ref{main} (b) is extremal in the nef cone, and its line is not defined over $\mathbb{Q}$ (these properties follow from \cite[Remark 1.1 and Lemma 1.3]{Cantat_K3}), which implies that it cannot be written as a positive real linear combination of classes of nef $\mathbb{Q}$-divisors.

The proof of Theorem \ref{main} uses holomorphic dynamics. For part (a), we will take $X$ to be one of the non-projective $K3$ surfaces constructed by McMullen \cite{McM} which have an automorphism with a Siegel disc. Using work of Cantat \cite{Cantat_K3}, we will show that at least one of the two ``eigenclasses'' associated to this automorphism cannot have a smooth semipositive representative, otherwise the Siegel disc would necessarily have Lebesgue measure zero.

Since McMullen's examples with a Siegel disc are necessarily non-projective, part (b) requires more sophisticated tools from dynamics, and specifically a crucial result of Cantat-Dupont \cite{CD}. In fact in this case we prove a more precise result than this. The surface $X$ is a generic hypersurface in $\mathbb{P}^1\times\mathbb{P}^1\times\mathbb{P}^1$ of degree $(2,2,2)$, and the class $[\alpha]$ is an eigenvector for the action on $H^{1,1}(X,\mathbb{R})$ induced by a chaotic automorphism of $X$, following Cantat \cite{Cantat_K3}. He proved that this class, as well as the corresponding class for the inverse automorphism, contain a unique closed positive current, and we show that both of these currents are not smooth at any point in the support of the measure with maximal entropy, see Theorem \ref{thm:main} below. \\

{\bf Acknowledgments. }The authors are grateful to J.-P. Demailly for his seminal work and his mathematical vision, which are an inspiration for much of our work.
We also thank A. H\"oring for useful communications in \cite{Hor} about Conjecture \ref{bpf}, C. Xu and T. Collins for discussions, M. Verbitsky for pointing out his related ongoing work with N. Sibony extending the uniqueness in Theorem \ref{thm:main} (i) to more irrational nef $(1,1)$ classes with volume zero on $K3$ surfaces, S. Takayama and T. Koike for discussions about Problem \ref{prob}, N. Sibony for providing references on rigidity, and the referee for useful comments including Remark \ref{refe}.
This research was partially conducted during the period the first-named author served as a Clay Research Fellow. The second-named author was partially supported by NSF grant DMS-1610278, and this work was finalized during his visit to the Center for Mathematical Sciences and Applications at Harvard University, which he would like to thank for the hospitality.

\section{Smooth positive currents}
In this section we give the proof of Theorem \ref{big} and Proposition \ref{nef}.

\begin{proof}[Proof of Theorem \ref{big}]
Up to renaming the class $[\alpha],$ we may assume that $\lambda=1$. First assume that the canonical bundle $K_X$ of $X$ is not pseudoeffective. This implies that $H^{2,0}(X)=H^0(X,K_X)=0$, which in turn implies that $X$ is projective by a well-known result of Kodaira, and it also implies that $[\alpha]=c_1(D)$ for some nef $\mathbb{R}$-divisor $D$ on $X$. We can thus conclude by the base-point-free theorem for $\mathbb{R}$-divisors \cite[Theorem 3.9.1]{BCHM}.

We can therefore assume that $K_X$ is pseudoeffective. Then the nef class $[\alpha]=[\alpha]-c_1(K_X)+c_1(K_X)$ is a sum of big and pseudoeffective, hence big.
Thanks to a result of Lamari \cite{Lam} (which was generalized by Demailly-P\u{a}un \cite{DP} to all dimensions), the class $[\alpha]$ contains a K\"ahler current $T=\alpha+\ddbar\psi$, i.e. a closed positive current which satisfies $T\geq\ve\omega$ in the weak sense, for some $\ve>0$, where $\omega$ is a K\"ahler form on $X$. If we consider all such K\"ahler currents and take the intersection of the subsets of $X$ where these currents are not smooth, we obtain a closed proper analytic subset $E_{nK}(\alpha)\subset X$, the non-K\"ahler locus of $[\alpha]$ (see \cite{Bou}). We may assume that $E_{nK}(\alpha)$ is nonempty, otherwise $[\alpha]$ is K\"ahler \cite{Bou} and the theorem is trivial. Using Demailly's regularization \cite{De} and an observation of Boucksom \cite[Theorem 3.17 (ii)]{Bou}, we can find one K\"ahler current $T=\alpha+\ddbar\psi$ such that $\psi$ is singular precisely along $E_{nK}(\alpha)$.
By the main result of \cite{CT}, we have that $E_{nK}(\alpha)=\Null(\alpha),$ where the null locus $\Null(\alpha)$ is defined to be the union of all irreducible curves $C\subset X$ such that $\int_C\alpha=0$ (in fact in dimension $2$ the proof simplifies greatly, see \cite[Section 5]{To2}). In particular $\Null(\alpha)$ is itself a proper analytic subvariety of $X$, of pure dimension $1$, with irreducible components given by curves $C_i, i=1,\dots N,$ with the intersection matrix $(C_i\cdot C_j)$ negative definite. Indeed for any real numbers $\lambda_i$ we have $\int_X\alpha^2>0$ and $\int_{\sum_i \lambda_i C_i}\alpha=0$, so by the Hodge index theorem \cite[Corollary IV.2.16]{BHPV} either $\sum_i \lambda_i C_i=0$ or $(\sum_i \lambda_i C_i)^2<0$. This means that the intersection form is negative definite on the linear span of the curves $C_i$, which proves our assertion. Grauert's criterion \cite[Theorem III.2.1]{BHPV},  \cite[p.367]{Gr}, shows that each connected component of $\Null(\alpha)$ can be contracted, so we get a holomorphic map $\pi:X\to Y$ where $Y$ is an irreducible normal compact complex surface, and $\pi$ is an isomorphism away from $\Null(\alpha)$, and it contracts each connected component of $\Null(\alpha)$ to a point in $Y$. Let $S=\cup_{j=1}^k\{p_j\}$ be the image $\pi(\Null(\alpha))$, so $Y$ is smooth away from $S$.

In general the problem of deciding whether a normal surface $Y$ is projective or K\"ahler is nontrivial, and has a long history (see e.g. \cite{Ar, Fuj, Gr,Moi, Za}). In our case, we use the key fact that $[\alpha]-c_1(K_X)$ is nef (and big) to obtain that for all $i$,
$$0\leq \int_{C_i}([\alpha]-c_1(K_X))=-K_X\cdot C_i.$$
Combining this with $(C_i^2)<0$ and with the adjunction formula
$$p_a(C_i)=1+\frac{(K_X\cdot C_i)+(C_i^2)}{2}\leq\frac{1}{2},$$
to conclude that $p_a(C_i)=0$ and so each $C_i$ is a smooth rational curve. This in turn shows that either $K_X\cdot C_i=0$, in which case $(C_i^2)=-2$ and $C_i$ is therefore a $(-2)$-curve, or else $K_X\cdot C_i=-1$, in which case $(C_i^2)=-1$ and $C_i$ is a $(-1)$-curve.

We now claim that all singular points of $Y$ are rational singularities. To see this we use Artin's criterion \cite[Theorem III.3.2]{BHPV}, and so it suffices to check that for each connected component $\bigcup_{i\in I}C_i$ of $\Null(\alpha)$ (where $I\subset\{1,\dots,N\}$) and for every integers $r_i\geq 0, i\in I$ (not all zero), the divisor $Z=\sum_{i\in I}r_i C_i$ satisfies $p_a(Z)\leq 0$. But this follows from
$$p_a(Z)=1+\frac{(Z^2)+\sum_{i\in I}r_i (K_X\cdot C_i)}{2} < 1+ \frac{1}{2} \sum_{i\in I} r_i (K_X\cdot C_i)\leq 1,$$
using that $(Z^2)<0$ since $\int_Z\alpha=0$.

By \cite[Lemma 3.3]{HP}, there is a smooth closed real $(1,1)$ form $\beta$ on $Y$, which is locally $\de\db$-exact, such that $[\pi^*\beta]=[\alpha]$. The pushforward $\pi_* T$ is then a closed positive $(1,1)$ current on $Y$, thanks to \cite[p.17]{Dem85}, and $\pi_*T$ is a smooth K\"ahler metric on $Y\backslash S$. Since the pullback of smooth forms on $Y$ give smooth forms on $X$, we can easily check that $\pi_*T$ is a K\"ahler current on $Y$.
By \cite[Lemma 3.4]{HP}, $\pi_*T$ is locally $\de\db$-exact, and it lies in the class $[\beta]$ in $H^{1,1}(Y,\mathbb{R})$, so we can write $\pi_*T=\beta+\ddbar\ti{\psi}$ for some quasi-psh function $\ti{\psi}$ on $Y$ which is smooth away from $S$. Near each singular point of $Y$, we choose an embedding of a neighborhood of the singular point as the unit ball in $\mathbb{C}^N$ and let
$$\hat{\psi}(z)=\widetilde{\max}(\ti{\psi}(z),A|z|^2-C),$$
where $\widetilde{\max}$ is a regularized maximum (see e.g. \cite[I.5.18]{Demb}) and we first choose $A$ large so that $\beta+A\ddbar |z|^2$ is a K\"ahler metric on this ball, and then we choose $C$ large so that $\hat{\psi}=\ti{\psi}$ in a neighborhood of the boundary of this ball ball. We can then define $\hat{\psi}=\ti{\psi}$ also outside of this ball, and after repeating this procedure at all singular points of $Y$, we obtain that $\beta+\ddbar\hat{\psi}$ is now a K\"ahler metric on $Y$ in the class $[\beta]$, and so $[\alpha]$ is semiample. This last part of the argument is essentially the same as \cite[Remark 3.5]{HP}, \cite[Proof of Theorem 2.7]{TZ}.
\end{proof}

In the proof above we used crucially the assumption that $\lambda[\alpha]-c_1(K_X)$ is nef and big to deduce that the singularities of the normal surface $Y$ were rational. It is instructive to see what happens if $X$ is a compact K\"ahler surface with an arbitrary nef and big $(1,1)$ class $[\alpha]$. Then the first half of the proof of Theorem \ref{big} still applies, and we obtain a K\"ahler current $T=\alpha+\ddbar\psi$ on $X$ which is singular precisely along $E_{nK}(\alpha)=\Null(\alpha)$, and a holomorphic map $\pi:X\to Y$ onto a normal compact complex surface $Y$, so that $\pi$ is an isomorphism away from $\Null(\alpha)$, and contracts each connected component of $\Null(\alpha)$ to a point (the union of these points is denoted by $S\subset Y$). In general now the irreducible components of $\Null(\alpha)$ need not be rational curves.

The pushforward $\pi_*T$ is still a K\"ahler current on $Y$, smooth away from $S$, but in general it need not be locally $\de\db$-exact (unlike the case discussed above in Theorem \ref{big}). Indeed we have the following:

\begin{proposition}
In this setting, if $\pi_*T$ is locally $\de\db$-exact near all points of $S$, then $Y$ is a K\"ahler analytic space and $[\alpha]$ is semiample.
\end{proposition}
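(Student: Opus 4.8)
The strategy is to rerun the final part of the proof of Theorem~\ref{big}, except that the smooth K\"ahler metric on $Y$ is now produced directly from the current $\pi_*T$ rather than from an auxiliary form $\beta$, and is then pulled back to recover the class $[\alpha]$. Thus the plan splits into two parts: first construct a smooth K\"ahler metric $\hat{T}$ on $Y$ (which exhibits $Y$ as a K\"ahler space), and then show that $[\pi^*\hat{T}]=[\alpha]$ in $H^{1,1}(X,\mathbb{R})$ (which shows that $[\alpha]$ is semiample, with $f=\pi$ and $\beta=\hat{T}$).

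For the first part I would begin by noting that $\pi_*T$ is locally $\ddbar$-exact on all of $Y$: away from the finite set $S$ the map $\pi$ is biholomorphic and $\pi_*T$ is a smooth K\"ahler form, which admits local potentials, while near the points of $S$ this is exactly the hypothesis. Fix $p_j\in S$ and an embedding of a neighborhood $U_j$ as an analytic subvariety of a ball in some $\mathbb{C}^{N_j}$ centered at $z=0$, and write $\pi_*T=\ddbar\ti\psi_j$ for a quasi-psh function $\ti\psi_j$, smooth and strictly psh on $U_j\setminus\{p_j\}$ and singular at $p_j$. Exactly as at the end of the proof of Theorem~\ref{big}, set $\hat\psi_j=\widetilde{\max}(\ti\psi_j,A|z|^2-C)$ with $\widetilde{\max}$ a regularized maximum \cite[I.5.18]{Demb}, choosing $A$ large so that $A\ddbar|z|^2$ is a K\"ahler form on $U_j$ and then $C$ large so that $\hat\psi_j=\ti\psi_j$ near $\partial U_j'$ for a slightly smaller ball $U_j'$; since $\ti\psi_j$ is singular at $p_j$ while $A|z|^2-C$ is smooth, the regularized maximum equals $A|z|^2-C$ near $p_j$ and is therefore smooth there. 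Defining $\hat{T}:=\ddbar\hat\psi_j$ on each $U_j'$ and $\hat{T}:=\pi_*T$ on $Y\setminus\bigcup_j U_j'$ gives a well-defined smooth closed $(1,1)$ form on $Y$ (the two definitions agree on the overlaps, where $\hat\psi_j=\ti\psi_j$), and it is a K\"ahler metric because it is locally a regularized maximum of two strictly psh potentials. In particular $Y$ is a compact K\"ahler analytic space.

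For the second part, note that by construction $\hat{T}-\pi_*T=\ddbar g$ as currents on $Y$, where $g$ is the globally defined function equal to $\hat\psi_j-\ti\psi_j$ on each $U_j'$ and to $0$ elsewhere. Pulling back (both currents admit local potentials, so $\pi^*$ is defined) yields $\pi^*\hat{T}=\pi^*\pi_*T+\ddbar(g\circ\pi)$. It therefore suffices to prove that $\pi^*\pi_*T=T$, for then $\pi^*\hat{T}=\alpha+\ddbar(\psi+g\circ\pi)$ and hence $[\pi^*\hat{T}]=[\alpha]$. To see this, observe that $R:=\pi^*\pi_*T-T$ is a closed real $(1,1)$ current which vanishes on $X\setminus\Null(\alpha)$ (where $\pi$ is biholomorphic), hence is supported on the curves $C_i$; by the support theorem for closed currents it equals $R=\sum_i c_i[C_i]$ for some $c_i\in\mathbb{R}$. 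Pairing the cohomology class $[R]=[\pi^*\pi_*T]-[\alpha]$ with each $C_k$ and using that $\int_{C_k}\pi^*\hat{T}=0$ (as $\pi(C_k)$ is a point), that $[\pi^*\pi_*T]=[\pi^*\hat{T}]$, and that $\int_{C_k}\alpha=0$, we obtain $\sum_i c_i\,(C_i\cdot C_k)=0$ for all $k$. Since the intersection matrix $(C_i\cdot C_k)$ is negative definite, all $c_i=0$, so $R=0$ and $\pi^*\pi_*T=T$, as desired.

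The main obstacle is the identity $\pi^*\pi_*T=T$: a priori the pullback of the pushforward can differ from $T$ by a current carried on the contracted curves, and ruling this out is exactly where the negative definiteness of the intersection form, together with $\int_{C_i}\alpha=0$, is used. By contrast, the construction of the smooth K\"ahler metric on $Y$ is a routine adaptation of the gluing argument already used for Theorem~\ref{big}; the only point to verify there is that the local potentials of $\pi_*T$ near $S$ are singular enough for the regularized maximum to smooth them out, which holds because $\pi_*T$ is the pushforward of a K\"ahler current under the contraction $\pi$.
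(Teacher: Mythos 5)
Your proof is correct and follows essentially the same route as the paper's: a regularized maximum of local potentials near $S$ produces a K\"ahler metric on $Y$, and Federer's support theorem together with the negative definiteness of $(C_i\cdot C_j)$ and $\int_{C_i}\alpha=0$ identifies the pullback with a representative of $[\alpha]$ (the paper merely organizes the first step through a partition-of-unity decomposition $\pi_*T=\gamma+\ddbar u$ and compares $T$ with $\pi^*\omega_Y$ corrected by a potential, rather than with $\pi^*\pi_*T$ directly). The one point you should make explicit is that $R=\pi^*\pi_*T-T$ is a difference of two closed \emph{positive} currents, hence has measure coefficients and is flat; this is what licenses the appeal to Federer's support theorem, since a general closed current supported on the curves $C_i$ (e.g.\ $\ddbar$ of a distribution supported there) need not be a linear combination of the $[C_i]$.
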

\begin{proof}
Indeed, if $T$ was locally $\de\db$-exact near $S$ then it would be locally $\de\db$-exact on all of $Y$, so there is a finite open cover $Y=\cup_\alpha U_\alpha$ with plurisubharmonic functions $u_\alpha$ on $U_\alpha$ such that $\pi_*T=\ddbar u_\alpha$ on $U_\alpha$.
On each nonempty intersection $U_\alpha\cap U_\beta$ we have that $\de\db (u_\alpha-u_\beta)=0$. This implies that $u_\alpha-u_\beta$ is smooth, by regularity of the Laplacian in a local embedding.
Let $\rho_\alpha$ be a partition of unity subordinate to this cover, and let $u=\sum_\alpha \rho_\alpha u_\alpha$. Then $\gamma:=\pi_*T-\ddbar u$ is closed and smooth on $Y$, because on $U_\alpha$ it equals $\ddbar(\sum_\beta\rho_\beta(u_\alpha-u_\beta))$, which is smooth.

Now $\ddbar u=\pi_*T-\gamma$ is smooth on $Y\backslash S$, and so by regularity of the Laplacian we conclude that $u$ is smooth on $Y\backslash S$.
Recall that $\gamma+\ddbar u\geq\ve \omega$ for some $\ve>0$, where $\omega$ is a Hermitian metric on $Y$. Then the same regularized maximum construction as in the proof of Theorem \ref{big} shows that there exists a smooth function $v$ on $Y$, which equals $u$ on $Y$ minus a small neighborhood of $S$, and such that $\gamma+\ddbar v\geq \ve'\omega$ holds on $Y$, for some $\ve'>0$. Therefore $\omega_Y:=\gamma+\ddbar v$ is a K\"ahler metric on $Y$, and $\pi^*\omega_Y$ is a smooth semipositive $(1,1)$ form on $X$.

We conclude the argument as in \cite[Proposition 3.6]{TW}.
Consider $$\eta=T-\pi^*\omega_Y+\ddbar ((v-u)\circ\pi),$$
which is a closed real $(1,1)$ current on $X$, supported on $\Null(\alpha)=\cup_i C_i$, and which is expressed as the difference of two positive currents
(since $\omega_Y+\ddbar (u-v)\geq \ve\omega$).
This last condition implies that its coefficients are measures, and so $\eta$ is a flat current (in the terminology of \cite{Fe}), and Federer's support theorem \cite[4.1.15]{Fe} implies that $\eta=\sum_i\lambda_i[C_i]$ for some
real numbers $\lambda_i$. But integrating $\eta$ over $C_i$ we see that
$$\lambda_i(C_i^2)=\int_{C_i}\eta=\int_{C_i}T=\int_{C_i}\alpha=0,$$ and so $\eta=0$.
Therefore
$$\pi^*\omega_Y=\alpha+\ddbar(\psi+(v-u)\circ\pi),$$
and again by regularity of the Laplacian, we conclude that the function $\vp=\psi+(v-u)\circ\pi$ is smooth on $X$ and $\alpha+\ddbar\vp=\pi^*\omega_Y$, which shows that $[\alpha]$ is semiample.
\end{proof}

There are in fact explicit examples where $Y$ is not a K\"ahler analytic space (and therefore $\pi_*T$ is not locally $\de\db$-exact). Indeed, we can take an example constructed by Grauert \cite[8(d), p.365-366]{Gr} of a projective ruled surface $X$ over a smooth curve of genus at least $2$, which contains a copy $C\subset X$ of this curve with $(C^2)<0$, and with a nef and big line bundle $L$ with $\Null(L)=C$ (we can take for example $L=A-\frac{(A\cdot C)}{(C^2)}C$ where $A$ is any ample line bundle on $X$), such that the contraction $\pi:X\to Y$ of $C$ is not a K\"ahler space, thanks to \cite{Moi}. It also follows that in this case $L$ is not semiample (if it was, $Y$ would be projective). The following problem is therefore very interesting:

\begin{problem}\label{prob}
Prove that in Grauert's example the line bundle $L$ is not Hermitian semipositive.
\end{problem}

This would be the first example of a nef and big line bundle (or $(1,1)$ class) on a projective surface (or compact K\"ahler surface) which is not Hermitian semipositive. Note that the example in \cite{DPS} is not big, and it can be modified \cite[Example 5.4]{BEGZ} to produce nef and big examples but only in dimensions $3$ or higher.

We can now give the proof of Proposition \ref{nef}, which is a well-known classical result:
\begin{proof}[Proof of Proposition \ref{nef}]
The fact that $[\alpha]$ is rational means that $[k\alpha]=c_1(L)$ for some $k\geq 1$ and some holomorphic nef line bundle $L$ on $X$. We will show that $L$ is semiample.

If $\int_X c_1(L)^2>0$ then $L$ is nef and big, so $X$ is projective, and $L$ is semiample by Kawamata-Shokurov's base-point-free theorem. So we may assume that $\int_X c_1(L)^2=0$.

If $c_1(L)=0$ then the exponential sequence gives that $L$ is trivial (since $X$ is simply connected), and in this case the conclusion clearly holds.

So we may assume that $\int_X c_1(L)^2=0$ and that $L$ is not trivial. By Riemann-Roch we have
$$h^0(X,L)+h^0(X,-L)\geq 2,$$
so exactly one among $L$ and $-L$ is effective (since $L$ is nontrivial), and since $L$ is nef we must have $h^0(X,L)\geq 2$. The same argument as in \cite[Theorem 3.8 (b)]{Reid_alg} shows that $L$ is globally generated.
\end{proof}

\section{Rough positive currents}\label{rough}
In this section we give the proof of Theorem \ref{main}, and its refinement in Theorem \ref{thm:main}. First, we introduce some notation and concepts from holomorphic dynamics.

\begin{definition}
	A holomorphic automorphism of a K3 surface $ X $ is called \emph{hyperbolic} if its action on $ H^{1,1}(X,\mathbb{R}) $ has some eigenvalue of norm strictly larger than $ 1 $.
\end{definition}

By the Gromov-Yomdin theorem \cite{Gro, Yo}, this is equivalent to the automorphism having positive topological entropy.

We fix a nowhere-vanishing holomorphic $2$-form $\Omega$ on $X$, and let $\dVol=\Omega\wedge\overline{\Omega}$ be the Lebesgue measure on $X$, which we may assume satisfies $\int_X \dVol=1$.
Because of the uniqueness of $\Omega$ (up to scale) it follows that $\dVol$ is invariant under every automorphism of $X$, and that every Ricci-flat K\"ahler metric $\omega_X$ on $X$ has volume form equal to $\omega_X^2=\left(\int_X\omega_X^2\right) \dVol$.

\medskip
\noindent \textbf{Properties of hyperbolic automorphisms.}
Because the signature of cup-product on $ H^{1,1}(X,\mathbb{R}) $ is $ (1,19) $ the eigenvalues of a hyperbolic automorphism have norm $ \lambda,\lambda^{-1} $ and the other ones are on the unit circle.
The two corresponding eigenclasses (which we normalize up to scaling so that $[\eta_+]\cdot[\eta_-]=1$) contain positive currents $ \eta_+, \eta_- $ with locally H\"older potentials (continuity is proved in \cite[Theorem 3.1]{Cantat_K3} and H\"older continuity in \cite[Proposition 2.4]{DS}, see also \cite[\S 1]{DG}), and their cup-product $ \mu := \eta_+\wedge \eta_- $ is a well-defined probability measure (see \cite{BT}, \cite[\S3.3]{Cantat_K3}), which does not charge any pluripolar subset of $X$ by \cite[Corollary 2.5]{BT} and \cite{BT2}.
The measure $ \mu $ is also called in this context the measure of maximal entropy, as it realizes the topological entropy of the automorphism and any other measure has strictly smaller entropy. Moreover, $ \mu $ is ergodic.

\medskip
\noindent \textbf{Siegel discs.}
If $T:X\to X$ is a hyperbolic automorphism of a $K3$ surface, following \cite{McM} we say that $T$ has a Siegel disc if there is a nonempty open subset $U\subset X$ which is biholomorphic to a polydisc in $\mathbb{C}^2$, $T$ preserves $U$ and $T|_U$ is holomorphically conjugate to an irrational rotation (i.e. of the form $(z_1,z_2)\mapsto (\lambda_1z_1,\lambda_2z_2)$ with $|\lambda_1|=|\lambda_2|=1$ and so that this map has dense orbits on $S^1\times S^1$). The main result of \cite{McM} is that such automorphisms exist, and the $K3$ surfaces $X$ which support them are never projective.

\medskip
\noindent \textbf{Kummer examples.}
\begin{definition}[cf. {\cite[Classification Thm.]{CD}}]
	An automorphism $ T $ of a K3 surface $ X $ is a \emph{Kummer example} if there exists a complex torus $ A $ with an automorphism $ L $, with a map $ X \to A/\pm 1 $ which intertwines the actions of $ T $ and $ L $.
\end{definition}
Note that for a projective Kummer example, we have that the rank of the Picard group is at least $ 17 $ (the $ 16 $ exceptional curves, plus a polarization).

By direct calculation (cf. \cite[pp.30-32]{Cantat_thesis}), if $(X,T)$ is a Kummer example on a $K3$ surface $X$, then both currents $\eta_{\pm}$ (and therefore also the measure $\mu$) are smooth on $X$.

We can now prove the following result:
\begin{theorem}
	\label{thm:main}
	Let $ X $ be a K3 surface, and let $ T\colon X\to X $ be an automorphism which acts hyperbolically in $ H^{1,1}(X,\mathbb{R}) $.
	Then the eigenclasses $ [\eta_+],[\eta_-] \in H^{1,1}(X,\mathbb{R})\setminus \{0\} $ such that $ T^*[\eta_{\pm}] = \lambda^{\pm 1}[\eta_{\pm}] $ have the following properties:
	\begin{itemize}
		\item [(i)] There are unique closed positive currents $ \eta_\pm $ in the corresponding classes. These currents satisfy $T^*\eta_\pm = \lambda^{\pm 1} \eta_\pm$.
        \item [(ii)] Suppose that $T$ has a Siegel disc. Then at least one of the currents $\eta_{\pm}$ cannot be smooth away from a proper closed analytic subset of $X$.
		\item [(iii)] Suppose instead that $X$ is projective and not Kummer. Then the currents $ \eta_{\pm} $ are not smooth at any point of the support of the measure of maximal entropy $ \mu= \eta_+\wedge \eta_- $.
	\end{itemize}
\end{theorem}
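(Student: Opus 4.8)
The plan is to establish (i) by a renormalized pull-back argument, (ii) by a vanishing computation on the Siegel disc fed into an ergodicity argument, and (iii) by reducing to the Kummer rigidity theorem of Cantat--Dupont.

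\medskip
\noindent\emph{Part (i).} The operator $S\mapsto \lambda^{-1}T^*S$ sends closed positive currents in the class $[\eta_+]$ to closed positive currents in the same class, since $T^*[\eta_+]=\lambda[\eta_+]$ and pull-back by a biholomorphism preserves positivity and closedness. Fixing a smooth $\theta\in[\eta_+]$ and writing currents through $\theta$-psh potentials, this operator becomes the affine map $u\mapsto \lambda^{-1}u\circ T+h$ with $h$ smooth, a $\lambda^{-1}$-contraction in the sup norm; its fixed point provides the (H\"older, as recalled) potential of $\eta_+$. For uniqueness I would invoke Cantat \cite{Cantat_K3}; the dynamical mechanism is that for any closed positive $S=\eta_+ + \ddbar u\in[\eta_+]$ (with $u$ quasi-psh and normalized) one has $\lambda^{-n}(T^n)^*S\to\eta_+$, because $\dVol$ is $T$-invariant and $u\in L^1(\dVol)$ force $\lambda^{-n}\,u\circ T^n\to 0$ in $L^1$, combined with the rigidity of the self-intersection-zero class $[\eta_+]$. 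Finally the equivariance is automatic from uniqueness: $\lambda^{-1}T^*\eta_+$ is closed, positive, and lies in $\lambda^{-1}T^*[\eta_+]=[\eta_+]$, so it equals $\eta_+$, i.e.\ $T^*\eta_+=\lambda\eta_+$; the class $[\eta_-]$ is handled identically by replacing $T$ with $T^{-1}$.

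\medskip
\noindent\emph{Part (ii).} The first, and unconditional, step is that \emph{both} eigencurrents vanish on any Siegel disc $U$. Conjugating $T|_U$ to the rotation $R(z_1,z_2)=(\lambda_1 z_1,\lambda_2 z_2)$ with $|\lambda_i|=1$, the flat form $\beta_0=\ddbar|z|^2$ satisfies $R^*\beta_0=\beta_0$; for any rotation-invariant polyannulus $K\Subset U$ the identity $T^*\eta_+=\lambda\eta_+$ gives $\int_K\eta_+\wedge\beta_0=\lambda\int_K\eta_+\wedge\beta_0$, so the positive trace measure $\eta_+\wedge\beta_0$ vanishes on $K$, forcing $\eta_+=0$ on the interior of $K$; letting such $K$ exhaust $U$ yields $\eta_+|_U=0$, and likewise $\eta_-|_U=0$, whence $\mu(U)=0$. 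Now suppose for contradiction that both $\eta_\pm$ are smooth away from a proper analytic subset $Z$ (take $Z=Z_+\cup Z_-$). Then $\mu=\eta_+\wedge\eta_-=\rho\,\dVol$ with $\rho\geq 0$ smooth on the connected manifold $X\setminus Z$. Since $T^*\mu=\mu$ and $T^*\dVol=\dVol$, the density $\rho$ is $T$-invariant, hence equal $\mu$-a.e.\ to a constant $c$ by ergodicity, with $c>0$ as $\int\rho\,\dVol=1$; because $\mu$ charges no pluripolar set, continuity of $\rho$ on the connected $X\setminus Z$ makes $\{\rho>0\}$ open and closed there, so $\rho\equiv c$. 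Thus $\mu=c\,\dVol$, contradicting $\mu(U)=0<c\,\dVol(U)$ since $U$ is open. Hence at least one of $\eta_\pm$ fails to be smooth off a proper analytic subset.

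\medskip
\noindent\emph{Part (iii).} Here I would argue by contradiction and reduce to Cantat--Dupont. Suppose $\eta_+$ (say) is smooth near some $p\in\operatorname{supp}\mu$. Since $T^*\eta_+=\lambda\eta_+$ and $T$ is a biholomorphism, the smooth locus of $\eta_+$ is open and $T$-invariant and meets $\operatorname{supp}\mu$; as it then has positive $\mu$-measure, ergodicity of $\mu$ shows $\eta_+$ is smooth on a set of full $\mu$-measure. The core of the argument is to convert this regularity into absolute continuity of $\mu=\eta_+\wedge\eta_-$ with respect to the real-analytic measure $\dVol$: smoothness of the unstable current $\eta_+$ makes the unstable lamination and its transverse structure smooth, so the conditionals of $\mu$ along unstable manifolds become absolutely continuous, which is precisely the hypothesis under which Cantat--Dupont's Kummer rigidity theorem \cite{CD} applies. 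That theorem then forces $(X,T)$ to be a Kummer example, contradicting the hypothesis; the identical argument applied to $T^{-1}$ (which interchanges $\eta_+$ and $\eta_-$ and fixes $\operatorname{supp}\mu$) rules out smoothness of $\eta_-$ on $\operatorname{supp}\mu$.

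\medskip
\noindent\emph{Main obstacle.} Part (ii) is comparatively soft once the vanishing on the Siegel disc is secured, and part (i) rests on the classical uniqueness of Cantat. The genuine difficulty lies in part (iii): passing from pointwise smoothness of a single Green current to absolute continuity of the measure of maximal entropy, i.e.\ supplying the exact hypothesis needed by Cantat--Dupont. This requires the Pesin and laminar description of $\mu=\eta_+\wedge\eta_-$ and control of the transverse regularity of the invariant conditionals, which is the step I expect to be most delicate.
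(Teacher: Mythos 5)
Your overall strategy coincides with the paper's: (i) is the Forn\ae ss--Sibony/Cantat normalized pull-back argument, (ii) combines vanishing of $\eta_\pm$ on the Siegel disc with ergodicity of $\mu$, and (iii) reduces to Cantat--Dupont via the identification of the unstable conditionals of $\mu$ with slices of $\eta_+$. Part (ii) is complete as written, and your direct proof that $\eta_\pm|_U=0$ (averaging against the rotation-invariant form) is a reasonable substitute for the paper's citation of \cite[Theorem 11.2]{McM}. In part (i) the uniqueness mechanism you sketch points the wrong way: for $S=\eta_++\ddbar u\in[\eta_+]$, the convergence $\lambda^{-n}(T^n)^*S\to\eta_+$ only says that the normalized pullbacks of $S$ converge to $\eta_+$, not that $S=\eta_+$, and ``the rigidity of the self-intersection-zero class'' is precisely what is being proved, so as stated the mechanism is circular. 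The correct direction is the expanding one: $\lambda^n(T^{-n})^*S$ is again a closed positive current in $[\eta_+]$ with normalized potential $\lambda^n\,u\circ T^{-n}$, whose $L^1(\dVol)$ norm equals $\lambda^n\norm{u}_{L^1}$; since normalized potentials of closed positive currents in a fixed class obey a uniform $L^1$ bound, this forces $\norm{u}_{L^1}=0$. You defer to \cite{Cantat_K3} for uniqueness, so this is a presentational slip rather than a fatal one.

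The genuine gap is in part (iii). The theorem of Cantat--Dupont \cite{CD} takes as hypothesis that the measure of maximal entropy $\mu$ is absolutely continuous with respect to $\dVol$; it does not take as hypothesis that the conditionals of $\mu$ along unstable manifolds are absolutely continuous. Your chain ``smoothness of $\eta_+$ on a full-measure set $\Rightarrow$ absolutely continuous unstable conditionals $\Rightarrow$ Cantat--Dupont applies'' therefore skips the step converting absolute continuity (more precisely, local dimension $2$) of the unstable conditionals into absolute continuity of $\mu$ itself. This is exactly what the Ledrappier--Young theory \cite{LY1,LY2} supplies in this volume-preserving, two-exponent setting: $h(\mu)=\lambda(\mu)\cdot\dim_+(\mu)$, and $\dim_+(\mu)=2$ if and only if $\mu\ll\dVol$. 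The paper runs the argument in the contrapositive, which is cleaner: not Kummer $\Rightarrow$ $\mu\not\ll\dVol$ (Cantat--Dupont) $\Rightarrow$ $\dim_+(\mu)<2$ (Ledrappier--Young) $\Rightarrow$ the conditional $\mu_+(x)=i^*\eta_+$ on $W^+(x)$ has local dimension strictly less than $2$ at $\mu$-a.e.\ $x$, which is incompatible with $\eta_+$ being even continuous near $x$, since continuity would give $\mu_+(x)(B(x,r))\leq Cr^2$. The other ingredient you would still need to justify is the identification $\mu_+(x)=i^*\eta_+$ of the unstable conditionals with the slices of $\eta_+$ along unstable manifolds, which the paper takes from \cite[pp.~42--43]{Cantat_K3}; you correctly flag this as the delicate point but leave it open.
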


Part (iii) uses as input the main result of Cantat--Dupont \cite{CD}.

\begin{proof}[Proof of \autoref{thm:main}(i), Uniqueness]
	
	In the present context this result is due to Cantat \cite[Thm. 2.4]{Cantat_K3}, but the main ideas of the proof trace back to the work of Forn\ae ss-Sibony \cite{FS} on H\'enon maps in $\mathbb{C}^2$, see also the work of Dinh-Sibony \cite{DS,DS2} for much stronger and general results. We reproduce the short proof for completeness.
	
 	To construct $\eta_\pm$ fix any \Kahler metric $ \omega $ and consider the sequence
	\begin{align}
	 \eta_N := \frac 1 N\sum_{i=1}^N \frac{(T^{*})^i} {\lambda^i} \omega \quad
	 \textrm{so that} \quad
	(T^*-\lambda)\eta_N = \frac 1 N \left( \frac{(T^{*})^{N+1}}{\lambda^N} - T^* \right) \omega
	\end{align}
	Pick some weak limit of $ \eta_N $, denoted $ \eta_+ $; then we have $ \eta_+\geq 0 $ and $ T^*\eta_+ = \lambda \eta_+ $. The same construction applied to $T^{-1}$ gives $\eta_-$. If necessary, we then scale $\eta_\pm$ so that $[\eta_+]\cdot[\eta_-]=1$.
	
	For uniqueness, it is easier to show it for the class $ [\eta_-] $; for $ \eta_+ $ the argument is the same but applied with $ T^{-1} $.
    Fix a smooth representative $\gamma$ of $[\eta_-]$, and let $ \alpha \geq 0 $ be a closed positive current in the same cohomology class $ [\eta_-] $.
	Then we can write $\alpha=\gamma+dd^c u_1$ and $\eta_-=\gamma+dd^c u_2$, for some quasi-psh functions $u_1,u_2$ which we may normalize with $\int_X u_j \dVol=0, j=1,2$. It is well-known that $u_1$ and $u_2$ are in $L^1$, and in fact for any quasi-psh function $v$ with $\gamma+dd^c v\geq 0$ and $\int_X v\dVol=0$ we have the estimate
    \[\norm{v}_{L^1}:= \int_X |v| \dVol\leq C_0,\]
    where the constant $C_0$ depends only on $(X,g)$ (the geometry of the background K\"ahler metric) and $\gamma$, but is independent of $v$. See for example \cite[Proposition 2.1]{DK} (there $\gamma$ is assumed to be positive definite, but this is not used in the proof; also they use the normalization $\sup_X v=0$, but these give uniformly equivalent constants). Therefore we see that every quasi-psh function $v$ with $\eta_-+dd^c v\geq 0$ and $\int_X v\dVol=0$ satisfies $\int_X |v|\dVol\leq 2C_0$.
    Then if we let $u=u_1-u_2$ we have $ \alpha = \eta_- + dd^c u,$ together with $ \int_X u \dVol =0 $ and $\int_X |u|\dVol\leq 2C_0$.
		
	We also have $$0\leq \lambda T^*\alpha = \lambda T^*\eta_- + dd^c (\lambda T^*u) =\eta_-+dd^c(\lambda T^*u),$$ and since $T$ preserves $\dVol$, we still have
	$\int_X (\lambda T^*u)\dVol=0$, and so
	\[2C_0\geq \lambda \norm{T^*u}_{L^1}=\lambda \norm{u}_{L^1}.\]
	
	Iterating the map $ T $ indefinitely and applying the same argument, we obtain $ \norm{u}_{L^1} = 0 $ and hence $ u=0 $.

Lastly, note that uniqueness immediately implies that $ T^*\eta_\pm = \lambda^{\pm 1} \eta_\pm $.
\end{proof}

\begin{proof}[Proof of \autoref{thm:main}(ii), Non-smoothness with Siegel disc]
If both $\eta_{\pm}$ are smooth away from proper closed analytic subsets of $X$, it follows that there is a proper closed analytic subset $E\subset X$ such that the restriction of $\mu=\eta_+\wedge\eta_-$ to $X\backslash E$ can be written as $\mu|_{X\backslash E}= f \dVol|_{X\backslash E}$ for some smooth nonnegative function $f$ on $X\backslash E$.
Moreover $f$ is not identically zero since $\mu$ does not charge pluripolar sets.

Since $\mu$ is ergodic and $\dVol$ is $T$-invariant, it follows that $f$ is constant $\mu$-a.e. on $X\backslash E$ (i.e. on the set where $f$ is positive).
Since $f$ is also smooth on $X\backslash E$, it follows that it is constant and in fact $f\equiv 1$ since both $\mu$ and $\dVol$ are probability measures on $X$ which do not charge $E$.
This shows that $\mu=\dVol$ as measures on $X$.
However, as observed in \cite[Theorem 11.2]{McM}, on the Siegel disc $U\subset X$ we must have $\eta_{\pm}|_U=0$, and therefore $\mu|_U=0$ too, which is a contradiction to the fact that $U$ has positive Lebesgue measure.
\end{proof}
In particular, the fact that $\mu|_U=0$ implies that $X\backslash U$ is not pluripolar, since $\mu$ is a probability measure which does not charge pluripolar sets. It is however not clear to us whether $X\backslash U$ must have positive Lebesgue measure.

Before continuing with the proof of \autoref{thm:main}(iii) we recall some preliminaries from dynamics.

\medskip

\noindent \textbf{Dimension, Entropy, Lyapunov exponents.}
The main reference for the following results is the work of Ledrappier--Young \cite{LY2}, which was preceded by many earlier results, including those of Ruelle, Margulis, Ma\~n\'e.
The results are stated for a holomorphic automorphism $ T $ of a K3 surface, so the Lyapunov exponents are always of the form $ \lambda, -\lambda $ and each occurs with multiplicity $ 2 $.
For any ergodic $ T $-invariant measure $ \mu $ we have
\begin{align}
h(\mu) = \lambda(\mu) \cdot \dim_+(\mu)
\end{align}
where $ h(\mu) $ is the entropy, $ \lambda(\mu) $ is the (positive) Lyapunov exponent and $ \dim_+(\mu) $ is the dimension of $ \mu $ along the unstable foliation.

To define dimension precisely, note that for $ \mu $-a.e. $ x $ there will be a (local) unstable manifold $ W^+(x) $ and a family of measures $ \mu_+(x) $ defined on $ W^+(x) $; the $ \mu_+(x) $ are disintegrations of $ \mu $ along the unstable directions.
Denoting by $ B(x,r) $ the ball of radius $ r $ in $ W^+(x) $ (in the induced metric from $X$), the limit
$$\dim_+(\mu):=\lim_{r\to 0} \frac{\log \mu_+(x)(B(x,r))}{\log r },$$ will exist $ \mu $-a.e. and will be called the dimension of $ \mu $ along unstables.

Finally, the main result of Ledrappier--Young \cite{LY1} in this case implies that $ \dim_+(\mu) = 2$ if and only if $ \mu $ is absolutely continuous with respect to Lebesgue measure.

\begin{proof}[Proof of \autoref{thm:main}(iii), Non-smoothness for non-Kummer]
	Suppose now that $ \eta_+ $ is a stable current of a hyperbolic automorphism of a projective K3 surface, which is not a Kummer example.
	Let $ \mu $ be the measure of maximal entropy and $ \dVol=\Omega\wedge\overline{\Omega} $ be the natural invariant Lebesgue measure coming from the holomorphic $ 2 $-form $\Omega$.
	The main result of Cantat--Dupont \cite{CD} implies that $ \mu $ is not absolutely continuous with respect to $ \dVol $.
	Thus $ \dim_+(\mu)<2 $, by the remarks above.
	
	Next, the discussion in \cite[pg. 42-43]{Cantat_K3} (see particularly ``Conclusion'' at the bottom of pg. 43) implies that the unstable measures $ \mu_+(x) $ can be described as follows, in a neighborhood of $ x $ in the unstable manifold $ W^+(x) $.
	The unstable manifold $ W^+(x) $ is an immersed holomorphic curve in the K3 surface $ X $, mapped as $ i:W^+(x)\to X $ (see e.g. \cite[\S2.6]{BLS}).
	Restrict the immersion $ i $ to a small neighborhood of $ x\in W^+(x) $ and pull back the current $ \eta_+ $ to obtain $ i^*\eta_+ =: \mu_+(x) $, which is denoted $ W^+(x)\wedge \eta_+ $ in \cite{Cantat_K3}. Note that the roles of $\eta_+$ and $\eta_-$ are exchanged in \cite{Cantat_K3}, since there he uses the convention $T_*\eta_{\pm}=\lambda^{\pm 1}\eta_{\pm}$ instead of the one that we use ($T^*\eta_{\pm}=\lambda^{\pm 1}\eta_{\pm}$), following \cite{CD,McM}.
	
	Because the dimension of $ \mu_+(x) $ is strictly less than $ 2 $, it follows that $ \eta_+ $ cannot be even continuous in a neighborhood of $ x $, for $ \mu $-a.e. $ x $.
\end{proof}

\begin{remark}
Thanks to the result in Theorem \ref{thm:main} (i), the classes $[\eta_{\pm}]$ have the remarkable rigidity property that they contain a unique closed positive current (such classes are called {\em rigid} in \cite{DS2}). Other examples of pseudoeffective classes with this property are all those classes which are equal to their negative part in Boucksom's divisorial Zariski decomposition \cite[Proposition 3.13]{Bou}, as well as the nef $(1,1)$ class constructed in \cite{DPS} which is not semipositive. However, in all these other examples the unique closed positive current has positive Lelong numbers (in fact, it is the current of integration along an effective $\mathbb{R}$-divisor), while in the setting of Theorem \ref{thm:main} the Lelong numbers vanish, since as mentioned earlier the current even has H\"older continuous local potentials.
\end{remark}

\begin{remark}\label{refe}
The referee kindly points out that there are other known examples of a rigid nef class which even contains a smooth semipositive representative. For this, it suffices to find a surface $X$ with a nonsingular holomorphic foliation which admits a unique invariant transverse probability measure which is further smooth. In this case if $\theta$ is the smooth laminar current of the foliation then $\int_X\theta^2=0$ and so if $T\in[\theta]$ is any closed positive current then $T\wedge\theta=0$. Hence $T$ is also laminar for the foliation, and by uniqueness of the invariant transverse measure we must have $T=\theta$. An explicit example was constructed by Mumford \cite[Example 1.5.1]{Laz}, with $X$ a ruled surface over a curve of genus $2$ and $[\theta]=c_1(L)$ for $L$ nef with trivial section ring.
\end{remark}
\begin{remark}
In fact, as the proofs above show, in the statements of both Theorem \ref{thm:main} (ii) and (iii) we can in fact replace the word ``smooth'' with ``continuous''.
\end{remark}

Recall the following questions which were raised by the second-named author:

\begin{question}[Question 5.5 in \cite{To4}]\label{q1}
Let $X^n$ be a compact Calabi-Yau manifold and $[\alpha_0]$ a nef $(1,1)$ class on $X$. Then there is a smooth closed semipositive real $(1,1)$ form $\omega_0$ on $X$ in the class $[\alpha_0]$.
\end{question}

The answer to this question is affirmative when $X$ is projective, $[\alpha_0]$ is nef and big and belongs to the real N\'eron-Severi group, by the real version of the base-point-free Theorem (see e.g. \cite[Theorem 2.3]{To}).

\begin{question}[Question 4.3 in \cite{To3}, and Question 5.3 in \cite{To4}] \label{q2}
Let $(X^n,\omega_X)$ be a compact Ricci-flat Calabi-Yau manifold, $[\alpha_0]$ a nef $(1,1)$ class with $\int_X\alpha_0^n=0$, and for $t>0$ let $\omega_t$ be the unique Ricci-flat K\"ahler metric on $X$ cohomologous to $[\alpha_0]+t[\omega_X]$. Then there is a proper closed analytic subvariety $V\subset X$ and a smooth closed semipositive real $(1,1)$ form $\omega_0$ on $X\backslash V$ with $\omega_0^n=0$ such that $\omega_t\to\omega_0$ in $C^\infty_{\rm loc}(X\backslash V)$ as $t\to 0$.
\end{question}

This question has an affirmative answer when $[\alpha_0]$ is semiample (so it is the pullback of a K\"ahler class under a map $f:X\to Y$) and the generic fiber of $f$ is a torus, by the results in \cite{GTZ,HT}. If $[\alpha_0]$ is semiample (but the generic fiber of $f$ is not necessarily a torus), then this question is answered affirmatively in \cite{TWY}, except that the convergence $\omega_t\to \omega_0$ is only known to happen in $C^0_{\rm loc}(X\backslash V)$.

However, despite these positive results, in general we have the following:

\begin{theorem}\label{negative}Questions \ref{q1} and \ref{q2} have a negative answer.
\end{theorem}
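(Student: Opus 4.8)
The plan is to use the ``bad'' eigenclass furnished by Theorem \ref{thm:main}(ii) on one of McMullen's $K3$ surfaces as a single counterexample to both questions, exploiting the tension between the uniqueness statement (i) and the non-smoothness statement (ii).

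First I would fix a (necessarily non-projective) $K3$ surface $X$ carrying a hyperbolic automorphism $T$ with a Siegel disc, which exists by \cite{McM}. Being a $K3$ surface, $X$ is K\"ahler and Calabi-Yau with $n=2$, so I would also fix a Ricci-flat K\"ahler metric $\omega_X$ as in Yau's theorem. Of the two eigenclasses $[\eta_\pm]$, Theorem \ref{thm:main}(ii) produces one — call it $[\alpha_0]$, with associated eigencurrent $\eta$ — that is not smooth away from any proper closed analytic subset of $X$. Before using it I would check that $[\alpha_0]$ is an admissible input for both questions. It is nef: following the construction in the proof of Theorem \ref{thm:main}(i), $[\alpha_0]$ is a weak limit of positive averages of the classes $(T^i)^*[\omega_X]$ (or $(T^{-i})^*[\omega_X]$), each of which is K\"ahler as the pullback of a K\"ahler class by a biholomorphism; hence $[\alpha_0]$ lies in the closure of the K\"ahler cone. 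It also satisfies $\int_X\alpha_0^2=0$, since $T^*$ preserves cup product and $[\alpha_0]^2=(T^*[\alpha_0])^2=\lambda^{\pm 2}[\alpha_0]^2$ forces $[\alpha_0]^2=0$ as $\lambda\neq 1$.

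For Question \ref{q1}: a smooth semipositive representative $\omega_0$ of $[\alpha_0]$ would in particular be a closed positive current in the class $[\alpha_0]$, so by the uniqueness in Theorem \ref{thm:main}(i) it would coincide with $\eta$; then $\eta$ would be smooth everywhere on $X$, contradicting Theorem \ref{thm:main}(ii). For Question \ref{q2}, I would argue by contradiction: suppose the desired proper analytic subset $V$ and smooth semipositive form $\omega_0$ on $X\setminus V$ exist, with $\omega_t\to\omega_0$ in $C^\infty_{\mathrm{loc}}(X\setminus V)$. The masses $\int_X\omega_t\wedge\omega_X=[\alpha_0]\cdot[\omega_X]+t[\omega_X]^2$ stay bounded as $t\to 0$, so a subsequence of the closed positive currents $\omega_t$ converges weakly on all of $X$ to a closed positive current $T$, and since $[\omega_t]\to[\alpha_0]$ in cohomology we get $[T]=[\alpha_0]$. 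On $X\setminus V$ the local smooth convergence forces $T=\omega_0$, while uniqueness (Theorem \ref{thm:main}(i)) forces $T=\eta$ globally; hence $\eta|_{X\setminus V}=\omega_0$ is smooth, again contradicting Theorem \ref{thm:main}(ii).

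The main obstacle is the step in Question \ref{q2} of upgrading the merely local smooth convergence on $X\setminus V$ to a genuine closed positive current on all of $X$ that can be identified with $\eta$. This rests on the uniform mass bound (to extract a weakly convergent subsequence on $X$) together with the continuity of the cohomology class under weak limits (to land in $[\alpha_0]$); once these are secured, the uniqueness of Theorem \ref{thm:main}(i) and the non-smoothness of $\eta$ off any analytic subset close the argument with no further analytic input.
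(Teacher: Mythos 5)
Your proposal is correct and follows essentially the same route as the paper: use the Siegel-disc examples of McMullen, invoke Theorem \ref{thm:main}(ii) to get an eigenclass whose unique closed positive current (by Theorem \ref{thm:main}(i)) is not smooth on any Zariski-open set, and derive both contradictions from uniqueness plus weak compactness of the Ricci-flat metrics $\omega_t$. The extra verifications you include (nefness of $[\alpha_0]$ as a limit of K\"ahler classes and $\int_X\alpha_0^2=0$ from the eigenvalue relation) are correct and simply make explicit what the paper leaves implicit.
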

\begin{proof}
Let us take $X$ to be a non-projective $K3$ surface with an automorphism $T:X\to X$ with positive entropy and with a Siegel disc, constructed by McMullen \cite{McM}. Then from Theorem \ref{thm:main} (ii) we immediately see that at least one among the two nef classes $[\eta_{\pm}]$ (say $[\eta_+]$) has no smooth semipositive representative (indeed the only closed positive current in that class is $\eta_+$ which is not even smooth on a Zariski open subset), thus answering Question \ref{q1} negatively.

For Question \ref{q2}, take $[\alpha_0]=[\eta_+]$, so that for every $t>0$ the classes $[\alpha_0]+t[\omega_X]$ are K\"ahler, and they each contain a unique Ricci-flat K\"ahler metric $\omega_t$. By weak compactness of currents, given any sequence $t_i\to 0$, up to passing to a subsequence, the metrics $\omega_{t_i}$ converges weakly as currents to a closed positive current in the class $[\eta_+]$ which by Theorem \ref{thm:main} (i) must equal $\eta_+$. In particular, $\omega_t\to\eta_+$ as currents as $t\to 0$. If there was a proper closed analytic subvariety $V\subset X$ such that we had smooth convergence on compact subsets of $X\backslash V$, this would imply that $\eta_+$ is smooth on $X\backslash V$, contradicting Theorem \ref{thm:main} (ii).
\end{proof}

\begin{remark}
In these examples with a Siegel disc $U\subset X$, let $\omega_X$ be a fixed Ricci-flat K\"ahler metric on $X$ and for $i\geq 0$ consider the Ricci-flat K\"ahler metrics
$$\omega_i=\frac{(T^*)^i\omega_X}{\lambda^i}.$$
Since their cohomology classes $[\omega_i]$ converge to $[\eta_+]$ as $i\to\infty$, it follows as above from Theorem \ref{thm:main} (i) that the metrics $\omega_i$ converge to $\eta_+$ weakly as currents on $X$. By the argument we just did, the metrics $\omega_i$ cannot converge smoothly on any Zariski open subset of $X$. On the other hand, it is observed in \cite[Theorem 11.2]{McM} that $\omega_i|_U$ converge smoothly to the zero form.
\end{remark}

To conclude, we give the proof of Theorem \ref{main}:
\begin{proof}[Proof of Theorem \ref{main}]
Part (a) follows immediately from Theorem \ref{thm:main} (ii), together with McMullen's examples \cite{McM} of hyperbolic $K3$ automorphisms with a Siegel disc.

For part (b), let $X$ be a generic hypersurface in $\mathbb{P}^1\times\mathbb{P}^1\times\mathbb{P}^1$ of degree $(2,2,2)$ with $T$ the composition of the three involutions of $X$ obtained by expressing $X$ as a ramified double cover of $\mathbb{P}^1\times\mathbb{P}^1$ in three different ways and interchanging the $2$ sheets of each cover.
It is proved in \cite{Cantat_K3,McM} that $T$ has positive entropy.
This $K3$ surface is not Kummer because the rank of the Picard group of a generic $ (2,2,2) $ surface will be $ 3 $, generated by the pullbacks of the hyperplane bundles on $\mathbb{P}^1\times\mathbb{P}^1$ under the three double covering maps (see e.g. \cite[p. 37]{Cantat_thesis}), whereas for projective Kummer $K3$ surfaces the rank of Picard has to be at least $ 17 $.

On $X$ the set of $(1,1)$ classes $[\alpha]$ in the real N\'eron-Severi group which satisfy $\int_X\alpha^2=1$ (equipped with the intersection pairing) is isometric to the Poincar\'e disc, with $[\eta_+]$ and $[\eta_-]$ lying on its ideal boundary, see e.g. \cite[p.9]{Cantat_K3}, \cite[pp.36-37]{Cantat_thesis}. In particular, there are nef $\mathbb{R}$-divisors $D_{\pm}$ such that $c_1(D_{\pm})=[\eta_{\pm}]$. By Theorem \ref{thm:main} (iii) none of these two $\mathbb{R}$-divisor can be Hermitian semipositive.
\end{proof}

\bibliographystyle{bib_style}
\bibliography{rough_positive_currents}

\providecommand{\bysame}{\leavevmode ---\ }
\providecommand{\andchar}{\&}
\begin{thebibliography}{{Tos}18b}

\bibitem[Art62]{Ar}
\textsc{Artin~M.} --- {``{Some numerical criteria for contractability of curves
  on algebraic surfaces}''}. {\em Amer. J. Math.} {\bfseries 84} (1962)
  485--496. \url{http://dx.doi.org/10.2307/2372985}.

\bibitem[BCHM10]{BCHM}
\textsc{Birkar~C., Cascini~P., Hacon~C.~D.,{ \andchar\hskip 0.5em}McKernan~J.}
  --- {``Existence of minimal models for varieties of log general type''}. {\em
  J. Amer. Math. Soc.} {\bfseries 23} no.~2, (2010) 405--468.
  \url{https://doi.org/10.1090/S0894-0347-09-00649-3}.

\bibitem[BEGZ10]{BEGZ}
\textsc{Boucksom~S., Eyssidieux~P., Guedj~V.,{ \andchar\hskip 0.5em}Zeriahi~A.}
  --- {``Monge-{A}mp\`ere equations in big cohomology classes''}. {\em Acta
  Math.} {\bfseries 205} no.~2, (2010) 199--262.
  \url{http://dx.doi.org/10.1007/s11511-010-0054-7}.

\bibitem[BHPV04]{BHPV}
\textsc{Barth~W.~P., Hulek~K., Peters~C. A.~M.,{ \andchar\hskip 0.5em}{Van de
  Ven}~A.} --- {\em {Compact complex surfaces}}, vol.~4 of {\em {Ergebnisse der
  Mathematik und ihrer Grenzgebiete. 3. Folge. A Series of Modern Surveys in
  Mathematics [Results in Mathematics and Related Areas. 3rd Series. A Series
  of Modern Surveys in Mathematics]}}. Springer-Verlag, Berlin --- second~ed.,
  2004. \url{http://dx.doi.org/10.1007/978-3-642-57739-0}.

\bibitem[BLS93]{BLS}
\textsc{Bedford~E., Lyubich~M.,{ \andchar\hskip 0.5em}Smillie~J.} ---
  {``{Polynomial diffeomorphisms of {${\bf C}^2$}. {IV}. {T}he measure of
  maximal entropy and laminar currents}''}. {\em Invent. Math.} {\bfseries 112}
  no.~1, (1993) 77--125. \url{http://dx.doi.org/10.1007/BF01232426}.

\bibitem[Bou04]{Bou}
\textsc{Boucksom~S.} --- {``{Divisorial {Z}ariski decompositions on compact
  complex manifolds}''}. {\em Ann. Sci. {\'E}cole Norm. Sup. (4)} {\bfseries
  37} no.~1, (2004) 45--76.
  \url{http://dx.doi.org/10.1016/j.ansens.2003.04.002}.

\bibitem[BT76]{BT}
\textsc{Bedford~E.{ \andchar\hskip 0.5em}Taylor~B.~A.} --- {``{The {D}irichlet
  problem for a complex {M}onge-{A}mp{\`e}re equation}''}. {\em Invent. Math.}
  {\bfseries 37} no.~1, (1976) 1--44.
  \url{http://dx.doi.org/10.1007/BF01418826}.

\bibitem[BT87]{BT2}
\bysame , {``Fine topology, \v{S}ilov boundary, and {$(dd^c)^n$}''}. {\em J.
  Funct. Anal.} {\bfseries 72} no.~2, (1987) 225--251.
  \url{http://dx.doi.org/10.1016/0022-1236(87)90087-5}.

\bibitem[Can99]{Cantat_thesis}
\textsc{Cantat~S.} --- {\em Dynamique des automorphismes des surfaces complexes
  compactes}. 1999. Thesis (Ph.D.)--\'Ecole Normale Superieure de Lyon.

\bibitem[Can01]{Cantat_K3}
\bysame , {``{Dynamique des automorphismes des surfaces {$K3$}}''}. {\em Acta
  Math.} {\bfseries 187} no.~1, (2001) 1--57.
  \url{http://dx.doi.org/10.1007/BF02392831}.

\bibitem[CD15]{CD}
\textsc{{Cantat}~S.{ \andchar\hskip 0.5em}{Dupont}~C.} --- {``{Automorphisms of
  surfaces: Kummer rigidity and measure of maximal entropy}''}.
  \url{https://perso.univ-rennes1.fr/serge.cantat/Articles/smooth-final.pdf}.

\bibitem[CT15]{CT}
\textsc{Collins~T.~C.{ \andchar\hskip 0.5em}Tosatti~V.} --- {``{K{\"a}hler
  currents and null loci}''}. {\em Invent. Math.} {\bfseries 202} no.~3, (2015)
  1167--1198. \url{http://dx.doi.org/10.1007/s00222-015-0585-9}.

\bibitem[Dem]{Demb}
\textsc{Demailly~J.-P.} --- {\em {Complex Analytic and Differential Geometry}}.
  \url{https://www-fourier.ujf-grenoble.fr/~demailly/manuscripts/agbook.pdf}.

\bibitem[Dem85]{Dem85}
\bysame , {``{Mesures de {M}onge-{A}mp{\`e}re et caract{\'e}risation
  g{\'e}om{\'e}trique des vari{\'e}t{\'e}s alg{\'e}briques affines}''}. {\em
  M{\'e}m. Soc. Math. France (N.S.)} no.~19, (1985) 124.
  \url{http://dx.doi.org/10.24033/msmf.320}.

\bibitem[Dem92a]{De}
\bysame , {``{Regularization of closed positive currents and intersection
  theory}''}. {\em J. Algebraic Geom.} {\bfseries 1} no.~3, (1992) 361--409.
  \url{https://www-fourier.ujf-grenoble.fr/~demailly/manuscripts/regularization.pdf}.

\bibitem[Dem92b]{De2}
\bysame , {``Singular {H}ermitian metrics on positive line bundles''}. in {\em
  Complex algebraic varieties ({B}ayreuth, 1990)}, vol.~1507 of {\em Lecture
  Notes in Math.}, pp.~87--104. Springer, Berlin, 1992.
  \url{http://dx.doi.org/10.1007/BFb0094512}.

\bibitem[DG09]{DG}
\textsc{Diller~J.{ \andchar\hskip 0.5em}Guedj~V.} --- {``{Regularity of
  dynamical {G}reen's functions}''}. {\em Trans. Amer. Math. Soc.} {\bfseries
  361} no.~9, (2009) 4783--4805.
  \url{http://dx.doi.org/10.1090/S0002-9947-09-04740-0}.

\bibitem[DK12]{DK}
\textsc{Dinew~S.{ \andchar\hskip 0.5em}Ko{\l}odziej~S.} --- {``{Pluripotential
  estimates on compact {H}ermitian manifolds}''}. in {\em {Advances in
  geometric analysis}} --- vol.~21 of {\em {Adv. Lect. Math. (ALM)}},
  pp.~69--86. Int. Press, Somerville, MA --- 2012.
  \url{https://arxiv.org/abs/0910.3937}.

\bibitem[DP04]{DP}
\textsc{Demailly~J.-P.{ \andchar\hskip 0.5em}Paun~M.} --- {``{Numerical
  characterization of the {K}{\"a}hler cone of a compact {K}{\"a}hler
  manifold}''}. {\em Ann. of Math. (2)} {\bfseries 159} no.~3, (2004)
  1247--1274. \url{http://dx.doi.org/10.4007/annals.2004.159.1247}.

\bibitem[DPS94]{DPS}
\textsc{Demailly~J.-P., Peternell~T.,{ \andchar\hskip 0.5em}Schneider~M.} ---
  {``Compact complex manifolds with numerically effective tangent bundles''}.
  {\em J. Algebraic Geom.} {\bfseries 3} no.~2, (1994) 295--345.
  \url{https://www-fourier.ujf-grenoble.fr/~demailly/manuscripts/dps1.pdf}.

\bibitem[DS05]{DS}
\textsc{Dinh~T.-C.{ \andchar\hskip 0.5em}Sibony~N.} --- {``{Green currents for
  holomorphic automorphisms of compact {K}{\"a}hler manifolds}''}. {\em J.
  Amer. Math. Soc.} {\bfseries 18} no.~2, (2005) 291--312.
  \url{http://dx.doi.org/10.1090/S0894-0347-04-00474-6}.

\bibitem[DS14]{DS2}
\bysame , {``Rigidity of {J}ulia sets for {H}\'enon type maps''}. {\em J. Mod.
  Dyn.} {\bfseries 8} no.~3-4, (2014) 499--548.
  \url{http://dx.doi.org/10.3934/jmd.2014.8.499}.

\bibitem[Fed69]{Fe}
\textsc{Federer~H.} --- {\em Geometric measure theory}. Die Grundlehren der
  mathematischen Wissenschaften, Band 153. Springer-Verlag New York Inc., New
  York --- 1969. \url{http://dx.doi.org/10.1007/978-3-642-62010-2}.

\bibitem[FS94]{FS}
\textsc{Forn{\ae}ss~J.~E.{ \andchar\hskip 0.5em}Sibony~N.} --- {``Complex
  dynamics in higher dimensions''}. in {\em Complex potential theory
  ({M}ontreal, {PQ}, 1993)} --- vol.~439 of {\em NATO Adv. Sci. Inst. Ser. C
  Math. Phys. Sci.}, pp.~131--186. Kluwer Acad. Publ., Dordrecht --- 1994.
  \url{http://dx.doi.org/10.1007/978-94-011-0934-5_4}. Notes partially written
  by Estela A. Gavosto.

\bibitem[Fuj83a]{Fuj}
\textsc{Fujiki~A.} --- {``{K{\"a}hlerian normal complex surfaces}''}. {\em
  T{\^o}hoku Math. J. (2)} {\bfseries 35} no.~1, (1983) 101--117.
  \url{http://dx.doi.org/10.2748/tmj/1178229104}.

\bibitem[Fuj83b]{Fu}
\textsc{Fujita~T.} --- {``Semipositive line bundles''}. {\em J. Fac. Sci. Univ.
  Tokyo Sect. IA Math.} {\bfseries 30} no.~2, (1983) 353--378.
  \url{http://hdl.handle.net/2261/6381}.

\bibitem[Gra62]{Gr}
\textsc{Grauert~H.} --- {``{{\"U}ber {M}odifikationen und exzeptionelle
  analytische {M}engen}''}. {\em Math. Ann.} {\bfseries 146} (1962) 331--368.
  \url{http://dx.doi.org/10.1007/BF01441136}.

\bibitem[Gro03]{Gro}
\textsc{Gromov~M.} --- {``On the entropy of holomorphic maps''}. {\em Enseign.
  Math. (2)} {\bfseries 49} no.~3-4, (2003) 217--235.
  \url{http://dx.doi.org/10.5169/seals-66687}.

\bibitem[GTZ13]{GTZ}
\textsc{Gross~M., Tosatti~V.,{ \andchar\hskip 0.5em}Zhang~Y.} --- {``Collapsing
  of abelian fibered {C}alabi-{Y}au manifolds''}. {\em Duke Math. J.}
  {\bfseries 162} no.~3, (2013) 517--551.
  \url{http://dx.doi.org/10.1215/00127094-2019703}.

\bibitem[H{\"o}r18]{Hor}
\textsc{H{\"o}ring~A.} --- {``{Adjoint $(1,1)$-classes on threefolds}''}.
  \href{http://arxiv.org/abs/1807.08442}{{\ttfamily arXiv:1807.08442
  [math.CV]}}.

\bibitem[HP16]{HP}
\textsc{H{\"o}ring~A.{ \andchar\hskip 0.5em}Peternell~T.} --- {``{Minimal
  models for {K}{\"a}hler threefolds}''}. {\em Invent. Math.} {\bfseries 203}
  no.~1, (2016) 217--264. \url{http://dx.doi.org/10.1007/s00222-015-0592-x}.

\bibitem[HT15]{HT}
\textsc{Hein~H.-J.{ \andchar\hskip 0.5em}Tosatti~V.} --- {``Remarks on the
  collapsing of torus fibered {C}alabi-{Y}au manifolds''}. {\em Bull. Lond.
  Math. Soc.} {\bfseries 47} no.~6, (2015) 1021--1027.
  \url{http://dx.doi.org/10.1112/blms/bdv067}.

\bibitem[KMM87]{KMM}
\textsc{Kawamata~Y., Matsuda~K.,{ \andchar\hskip 0.5em}Matsuki~K.} ---
  {``Introduction to the minimal model problem''}. in {\em Algebraic geometry,
  {S}endai, 1985} --- vol.~10 of {\em Adv. Stud. Pure Math.}, pp.~283--360.
  North-Holland, Amsterdam --- 1987.
  \url{http://faculty.ms.u-tokyo.ac.jp/~kawamata/KMM.pdf}.

\bibitem[Koi15]{Koi}
\textsc{Koike~T.} --- {``On minimal singular metrics of certain class of line
  bundles whose section ring is not finitely generated''}. {\em Ann. Inst.
  Fourier (Grenoble)} {\bfseries 65} no.~5, (2015) 1953--1967.
  \url{http://dx.doi.org/10.5802/aif.2978}.

\bibitem[Lam99]{Lam}
\textsc{Lamari~A.} --- {``{Le c{\^o}ne k{\"a}hl{\'e}rien d'une surface}''}.
  {\em J. Math. Pures Appl. (9)} {\bfseries 78} no.~3, (1999) 249--263.
  \url{http://dx.doi.org/10.1016/S0021-7824(98)00005-1}.

\bibitem[Laz04]{Laz}
\textsc{Lazarsfeld~R.} --- {\em Positivity in algebraic geometry. {I}}, vol.~48
  of {\em Ergebnisse der Mathematik und ihrer Grenzgebiete. 3. Folge. A Series
  of Modern Surveys in Mathematics [Results in Mathematics and Related Areas.
  3rd Series. A Series of Modern Surveys in Mathematics]}. Springer-Verlag,
  Berlin --- 2004. \url{http://dx.doi.org/10.1007/978-3-642-18808-4}. Classical
  setting: line bundles and linear series.

\bibitem[LOP18]{LOP}
\textsc{{Lazi{\'c}}~V., {Oguiso}~K.,{ \andchar\hskip 0.5em}{Peternell}~T.} ---
  {``{Nef line bundles on Calabi-Yau threefolds, I}''}. {\em Int. Math. Res.
  Not. IMRN} (2018)  --- \href{http://arxiv.org/abs/1601.01273}{{\ttfamily
  1601.01273 [math.AG]}}.

\bibitem[LY85a]{LY2}
\textsc{Ledrappier~F.{ \andchar\hskip 0.5em}Young~L.-S.} --- {``The metric
  entropy of diffeomorphisms. {II}. {R}elations between entropy, exponents and
  dimension''}. {\em Ann. of Math. (2)} {\bfseries 122} no.~3, (1985) 540--574.
  \url{https://doi.org/10.2307/1971329}.

\bibitem[LY85b]{LY1}
\bysame , {``{The metric entropy of diffeomorphisms. {I}. {C}haracterization of
  measures satisfying {P}esin's entropy formula}''}. {\em Ann. of Math. (2)}
  {\bfseries 122} no.~3, (1985) 509--539.
  \url{http://dx.doi.org/10.2307/1971328}.

\bibitem[McM02]{McM}
\textsc{McMullen~C.~T.} --- {``{Dynamics on {$K3$} surfaces: {S}alem numbers
  and {S}iegel disks}''}. {\em J. Reine Angew. Math.} {\bfseries 545} (2002)
  201--233. \url{http://dx.doi.org/10.1515/crll.2002.036}.

\bibitem[Moi75]{Moi}
\textsc{Moishezon~B.~G.} --- {``{Singular {K}{\"a}hlerian spaces}''}. in {\em
  {Manifolds---{T}okyo 1973 ({P}roc. {I}nternat. {C}onf., {T}okyo, 1973)}} ---
  pp.~343--351. Univ. Tokyo Press, Tokyo --- 1975.

\bibitem[Rei97]{Reid_alg}
\textsc{Reid~M.} --- {``{Chapters on algebraic surfaces}''}. in {\em {Complex
  algebraic geometry ({P}ark {C}ity, {UT}, 1993)}} --- vol.~3 of {\em {IAS/Park
  City Math. Ser.}}, pp.~3--159. Amer. Math. Soc., Providence, RI --- 1997.
  \url{https://arxiv.org/abs/alg-geom/9602006}.

\bibitem[Tos09]{To}
\textsc{Tosatti~V.} --- {``{Limits of {C}alabi-{Y}au metrics when the
  {K}{\"a}hler class degenerates}''}. {\em J. Eur. Math. Soc. (JEMS)}
  {\bfseries 11} no.~4, (2009) 755--776.
  \url{http://dx.doi.org/10.4171/JEMS/165}.

\bibitem[Tos11]{To3}
\bysame , {``{{D}egenerations of {C}alabi-{Y}au metrics}''}. {\em Acta Phys.
  Polon. B Proc. Suppl.} {\bfseries 4} no.~3, (2011) 495--505.
  \url{http://dx.doi.org/10.5506/APhysPolBSupp.4.495}.

\bibitem[Tos12]{To4}
\bysame , {``{{C}alabi-{Y}au manifolds and their degenerations}''}. {\em Ann.
  N.Y. Acad. Sci.} {\bfseries 1260} (2012) 8--13.
  \url{http://dx.doi.org/10.1111/j.1749-6632.2011.06259.x}.

\bibitem[{Tos}18a]{To5}
\textsc{{Tosatti}~V.} --- {``{KAWA lecture notes on the K\"ahler-Ricci
  flow}''}. {\em Ann. Fac. Sci. Toulouse Math. (6)} {\bfseries 27} no.~2,
  (2018) 285--376. \url{http://dx.doi.org/10.5802/afst.1571}.

\bibitem[{Tos}18b]{To2}
\bysame , {``{Nakamaye's theorem on complex manifolds}''}. in {\em Algebraic
  geometry---{S}alt Lake City 2015. {P}art 1}, vol.~97.1 of {\em Proc. Sympos.
  Pure Math.}, pp.~633--655. Amer. Math. Soc., Providence, RI, 2018.
  \url{http://dx.doi.org/10.1090/pspum/097.1/01688}.

\bibitem[TW13]{TW}
\textsc{Tosatti~V.{ \andchar\hskip 0.5em}Weinkove~B.} --- {``The
  {C}hern-{R}icci flow on complex surfaces''}. {\em Compos. Math.} {\bfseries
  149} no.~12, (2013) 2101--2138.
  \url{http://dx.doi.org/10.1112/S0010437X13007471}.

\bibitem[TWY18]{TWY}
\textsc{{Tosatti}~V., {Weinkove}~B.,{ \andchar\hskip 0.5em}{Yang}~X.} ---
  {``{The K\"ahler-Ricci flow, Ricci-flat metrics and collapsing limits}''}.
  {\em Amer. J. Math.} {\bfseries 140} no.~3, (2018) 653--698.
  \url{http://dx.doi.org/10.1353/ajm.2018.0016}.

\bibitem[TZ18]{TZ}
\textsc{{Tosatti}~V.{ \andchar\hskip 0.5em}{Zhang}~Y.} --- {``{Finite time
  collapsing of the K{\"a}hler-Ricci flow on threefolds}''}. {\em Ann. Sc.
  Norm. Super. Pisa Cl. Sci. (5)} {\bfseries 18} no.~1, (2018) 105--118.
  \url{http://dx.doi.org/10.2422/2036-2145.201508_003}.

\bibitem[Wil92]{Wi}
\textsc{Wilson~P. M.~H.} --- {``{The {K}{\"a}hler cone on {C}alabi-{Y}au
  threefolds}''}. {\em Invent. Math.} {\bfseries 107} no.~3, (1992) 561--583.
  \url{http://dx.doi.org/10.1007/BF01231902}.

\bibitem[Yom87]{Yo}
\textsc{Yomdin~Y.} --- {``Volume growth and entropy''}. {\em Israel J. Math.}
  {\bfseries 57} no.~3, (1987) 285--300.
  \url{http://dx.doi.org/10.1007/BF02766215}.

\bibitem[Zar62]{Za}
\textsc{Zariski~O.} --- {``{The theorem of {R}iemann-{R}och for high multiples
  of an effective divisor on an algebraic surface}''}. {\em Ann. of Math. (2)}
  {\bfseries 76} (1962) 560--615. \url{http://dx.doi.org/10.2307/1970376}.

\end{thebibliography}

\end{document}